\tikzset{>={Latex[width=2.5mm,length=2.5mm]}}
\tikzstyle{block}=[draw opacity=0.7,line width=1.4cm]
\title{\LARGE \bf
Multi-Agent Maximization of a Monotone Submodular Function via Maximum Consensus}
\author{%
  Navid Rezazadeh\\
  Mechanical and Aerospace Eng. Dept.\\
  University of California Irvine\\
  Irvine, CA \\
  \texttt{nrezazad@uci.edu}
  \And
  Solmaz S. Kia\\
  Mechanical and Aerospace Eng. Dept.\\
  University of California Irvine\\
  Irvine, CA \\
  \texttt{solmaz@uci.edu} \\
}
\newcommand{\real}{{\mathbb{R}}}
\newcommand{\realnonnegative}{{\mathbb{R}}_{\ge 0}}
\newcommand{\vect}[1]{\boldsymbol{\mathbf{#1}}}
\newcommand{\SUM}[2]{\sum_{#1}^{#2}}
 \newcommand{\boxend}{\hfill \ensuremath{\Box}}
\newtheorem{thm}{Theorem}[section]
\newtheorem{prop}{Proposition}[section]
\newtheorem{rem}{Remark}[section]
\newtheorem{lem}{Lemma}[section]
\newenvironment{proof}[1][Proof]{\begin{trivlist}
\item[\hskip \labelsep {\bfseries #1}]}{\boxend\end{trivlist}}
\renewcommand*{\@opargbegintheorem}[3]{\trivlist
      \item[\hskip \labelsep{\bfseries #1\ #2}] \textbf{(#3)}\ \itshape}
\newcommand{\oprocendsymbol}{\hbox{$\bullet$}}
\newcommand{\oprocend}{\relax\ifmmode\else\unskip\hfill\fi\oprocendsymbol}
\begin{document}
\maketitle
\thispagestyle{plain}
\pagestyle{plain}

\begin{abstract}                     
Constrained submodular set function maximization problems often appear in multi-agent decision-making problems with a discrete feasible set. A prominent example is the problem of multi-agent mobile sensor placement over a discrete domain.
However, submodular set function optimization problems are known to be NP-hard. In this paper, we consider a class of submodular optimization problems that consists of maximization of a monotone and submodular set function subject to a uniform matroid constraint over a group of networked agents that communicate over a connected undirected graph. Our objective is to obtain a distributed suboptimal polynomial-time algorithm that enables each agent to obtain its respective policy via local interactions with its neighboring agents. 
Our solution is a fully distributed gradient-based algorithm using the multilinear extension of the submodular set functions and exploiting a maximum consensus scheme. This algorithm results in a policy set that when the team objective function is evaluated at worst case the objective function value is in $1-1/{\textup{e}}-O(1/T)$ of the optimal solution. An example demonstrates our results.
\end{abstract}

\section{Introduction}
In recent years, optimal multi-agent sensor placement/dispatch to cover areas for sampling/observation objectives has been of great interest in the robotic community to solve various coverage, exploration, and monitoring problems. Depending on the problem setting, the planning space can be continuous~\cite{SM-FB:06,NZ-CGC-XY:17,YK-CGC:17,NZ-XYSBA-CGC:18} or discrete~\cite{NR-SSK-Conf:19,XD-MG-RP-FB:20,SW-CGC:20,SA-EF-LSL:14}.  
In this paper, our interest is in planning in discrete space.  Many optimal discrete policy-making for multi-agent sensor placement problems appear in the form of maximizing a utility for the team~\cite{NM-RH:18,AH-MG-HV-UT:20,VT-AJ-GJP:16,MS-SB-HV:10,STJ-SLS:15,NR-SSK-Conf:19}. A particular subset of these problems that we are interested in is
in the form of a set-valued function maximization problem described by
\begin{align}\label{eq::mainPrbIntro}
    \underset{\mathcal{R} \in \mathcal{I}\subset\mathcal{P}}{\textup{max}}f(\mathcal{R}),
\end{align}
where $\mathcal{I} \subset 2^{\mathcal{P}}$ is the admissible set of subsets originated from the ground policy set $\mathcal{P}$, which is the union of the local policy set of the agents; see Fig.~\ref{fig:city_partition} for an example.

Problem~\eqref{eq::mainPrbIntro} is known to be NP hard~\cite{GLN-LAW-MLF:78}. However, when the objective function is monotone increasing and submodular set function, it is well-known that the sequential greedy algorithm~\cite{MLF-GLN-LAW:78} delivers a polynomial-time suboptimal solution with guaranteed optimality bound. For large scale submodular maximization problems, reducing the size of the problem through approximations~\cite{KW-RI-JB:14} or using several processing units leads to a faster sequential greedy algorithm, however, with some sacrifices on the optimality bound~\cite{BM-AK-RS-AK:13,BM-MZ-AK:16,RK-BM-SV-AV:15,PSR-OS-MF:20}. 

When the set that the agents can choose their policy from is the same for all agents and the agents are homogeneous, the feasible set $\mathcal{I}$ of~\eqref{eq::mainPrbIntro}  can be spanned by a uniform matroid constraint\footnote{see Section~\ref{sec::submodul} for a formal definition of uniform matroid constraint.}.
For problems with uniform matroid constraint, the sequential greedy algorithm delivers a suboptimal solution whose utility value is no worse than $1-\frac{1}{\textup{e}}$ times the optimal utility value~\cite{GLN-LAW-MLF:78}. On the other hand, when the local policy set of the agents is disjoint and/or the agents are heterogeneous, and each agent has to choose one policy from its local set, the feasible set $\mathcal{I}$ of~\eqref{eq::mainPrbIntro} can be spanned by the so-called partitioned matroid constraint\footnote{see Section~\ref{sec::submodul} for a formal definition of partitioned matroid constraint.}. For problems with partitioned matroid constraint, the sequential greedy algorithm delivers a suboptimal solution whose utility value is no worse than $\frac{1}{2}$ times the optimal utility value~\cite{MLF-GLN-LAW:78}. For problems with partitioned matroid constraint, more recently, a suboptimal solution with a better optimality gap is proposed in~\cite{JV:08,AAB-BM-JB-AK:17,AM-HH-AK:20,OS-MF:20} using the multilinear continuous relaxation of a submodular set function. The multilinear continuous relaxation of a submodular set function defined on the vertices of the $n$-dimensional hypercube facilitates achieves better optimality bounds for the maximization problem~\eqref{eq::mainPrbIntro} by allowing to use a continuous gradient-based optimization algorithm. That is, the relaxation transforms the main discrete problem into a continuous optimization problem with linear constraints. The optimality bound achieved by the algorithm proposed in~\cite{JV:08} is  $1-\frac{1}{\textup{e}}$ bound for a partitioned matroid constraint, which outperforms the sequential greedy algorithm whose optimally bound is $1/2$. However, this solution requires a central authority to find the optimal solution. In sensor placement problems, when the agents are self-organizing autonomous mobile agents with communication and computation capabilities, it is desired to solve the optimization problem~\ref{eq::mainPrbIntro} in a distributed way, without involving a central authority. A distributed multilinear extension based continuous algorithm that uses an average consensus scheme between the agents to solve~\eqref{eq::mainPrbIntro} subject to partitioned matroid constraint is proposed in~\cite{AR-AA-BS-JGP-HH:19}. However, the proposed algorithm assumes that access to the exact multilinear extension of the utility function is available, whose calculation is exponentially hard. 

In this paper, motivated by the improved optimality gap of multilinear continuous relaxation based algorithms, we develop a distributed implementation of the algorithm of~\cite{JV:08} over a connected undirected graph to obtain a suboptimal solution for~\eqref{eq::mainPrbIntro} subject to a partitioned matroid constraint. In this algorithm, to manage the computational cost of constructing the multilinear extension of the utility function, we use a sampled based evaluation of the multilinear extension and propose a gradient-based algorithm, which uses a maximum consensus message passing scheme over the communication graph. Our algorithm uses a fully distributed rounding technique to compute the final solution of each agent. Through rigorous analysis, we show that our proposed distributed algorithm achieves, with a known probability, a $1-\frac{1}{\textup{e}}-O(1/T)$ optimality bound, where $T$ is the number of times agents communicated over the network. A numerical example demonstrates our results.

The organization of this paper is as follows. In Section~\ref{sec::Notation} we introduce the notation used throughout the paper. Section~\ref{sec::submodul} provides the necessary background on submodularity and submodular functions. We introduce our proposed algorithm in Section~\ref{sec::Main}. A demonstration of our results is provided in  Section~\ref{sec::numerical}. Appendix A gives the proof of our main results in Section~\ref{sec::Main}. Appendix B gives the auxiliary results that we use in establishing the proof of our main results.

\section{Notation}\label{sec::Notation}
We denote the vectors with bold small font.  The $p^{\textup{th}}$ element of vector $\vect{x}$ is denoted by $x_p$. We denote the inner product of two vectors $\vect{x}$ and $\vect{y}$ with appropriate dimensions by $\vect{x}.\vect{y}$. Furthermore, we use $\vect{1}$ as a vector of ones with appropriate dimension. We denote sets with capital curly font. 
For a $\mathcal{P}=\{1,\cdots,n\}$ and a vector $\vect{x} \in \realnonnegative^n$ with $0 \leq x_p \leq 1$, the set $\mathcal{R}_{\vect{x}}$ is a random set where $p \in\mathcal{P}$ is in  $\mathcal{R}_{\vect{x}}$ with the probability $x_p$. Hence, we call such vector $\vect{x}$ as membership probability vector. Furthermore, for $\mathcal{S} \subset \mathcal{P}$, $\vect{1}_{\mathcal{S}} \in \{0,1\}^{n}$ is the vector whose $p^{\textup{th}}$ element is $1$ if $p \in \mathcal{S}$ and $0$ otherwise; we call $\vect{1}_{\mathcal{S}}$ the membership indicator vector of set $\mathcal{S}$. $|x|$ is the absolute value of  $x \in \real$. By overloading the notation, we also use $|\mathcal{S}|$ as the cordiality of set $\mathcal{S}$. We denote a graph by $\mathcal{G}(\mathcal{A},\mathcal{E})$ where $\mathcal{A}$ is the node set and $\mathcal{E} \subset \mathcal{A} \times \mathcal{A}$ is the edge set. 
$\mathcal{G}$ is undirected if and only $(i,j) \in \mathcal{E}$ means that agents $i$ and $j$ can exchange information. An undirected graph is connected if there is a path from each node to every other node in the graph. 
We denote the set of the neighboring nodes of node $i$ by $\mathcal{N}_i=\{j\in\mathcal{A}|(i,j) \in \mathcal{E} \}$. We also use $d(\mathcal{G})$ to show the diameter of the graph.

Given a set $\mathcal{F}\subset \mathcal{X}\times \real$ and an element $(p,\alpha)\in \mathcal{X}\times \real$ we define the addition operator $\oplus$ as $\mathcal{F}{\oplus}\{(p,\alpha)\}=\{(u,\gamma)\in \mathcal{X}\times \real\,|\,(u,\gamma) \in \mathcal{F}, u\not = p \} \cup \{(u,\gamma+\alpha)\in \mathcal{X}\times \real\,|\, (u,\gamma) \in \mathcal{F}, u = p \} \cup \{(p,\alpha)\in \mathcal{X}\times \real\,|\,(p,\gamma) \not \in \mathcal{F} \}$. Given a collection of sets $\mathcal{F}_j\in \mathcal{X}\times \real$, $j\in\mathcal{N}$, we define the max-operation over these collection as $\underset{j\in \mathcal{N}}{\textup{MAX}} \,\,\mathcal{F}_j= \{(u,\gamma)\in X\times \real| (u,\gamma) \in \bar{\mathcal{F}} \text{~s.t.~} \gamma =  \underset{(u,\alpha) \in \bar{\mathcal{F}}}{\textup{max}} \alpha\}$, where $\bar{\mathcal{F}}=\bigcup_{j\in \mathcal{N}} \mathcal{F}_j.$

\section{Submodular Functions}\label{sec::submodul}

A set function $f:2^{\mathcal{P}} \to \realnonnegative$ is monotone increasing if for sets $\mathcal{P}_1 \subset \mathcal{P}_2 \subset \mathcal{P}$,
\begin{align*}
    f(\mathcal{P}_1)\leq f(\mathcal{P}_2),
\end{align*}
holds. Furthermore, defining $\Delta_f(p|\mathcal{S}) = f(\mathcal{S} \cup \{p\})-f(\mathcal{S}),\,\, \mathcal{S} \subset \mathcal{P}$, the set function $f$ is submoular if for $p \in \mathcal{P} \setminus \mathcal{P}_2$,
\begin{align}\label{eq::submodularity}
    \Delta_f(p|\mathcal{P}_1)\geq \Delta_f(p|\mathcal{P}_2),
\end{align}
holds, which shows the diminishing return of a set function. Furthermore, a set function is normalized if $f(\emptyset)=0$. 

In what follows without loss of generality we assume that the ground set $\mathcal{P}$ is equal to $\{1,\cdots,n\}$. Submodular functions are functions assigning values to all subsets of a finite set $\mathcal{P}$. Using the set membership indicator, equivalently,
we can regard them as functions on the boolean hypercube, $f:\{1,0\}^{n}\to\real$. For a submodular function $f:2^{\mathcal{P}} \to \realnonnegative$, its \emph{multilinear extension}  $F:[0,1]^{n} \to \realnonnegative$ in the continuous space is 
\begin{align}\label{eq::F_determin}
    F(\vect{x}) = \SUM{\mathcal{R} \subset \mathcal{P}}{} f(\mathcal{R}) \prod_{p \in \mathcal{R}}^{} x_p \prod_{p \not\in \mathcal{R}}^{}(1-x_p),\quad~~ \vect{x} \in [0,1]^{n}.
\end{align}
The multilinear extension $F$ of set function $f$ is a unique multilinear function agreeing with f on the vertices of the hypercube. The multilinear extension function has the following properties.
\begin{lem}[See~\cite{JV:08}]\label{lem::prop_F} If $f$ is non-decreasing, then $\frac{\partial F}{\partial x_p}\geq0$ for all $p\in\mathcal{P}$, everywhere in $[0,1]^n$ (monotonicity of $F$). If $f$ is submodular, then $\frac{\partial^2 F}{\partial x_p x_q}\leq 0$ for all $p,q\in\mathcal{P}$, everywhere in $[0,1]^n$.
\end{lem}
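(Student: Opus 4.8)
The plan is to exploit that $F$ in~\eqref{eq::F_determin} is \emph{multilinear}: for each coordinate $p$ it is an affine function of $x_p$ once the other coordinates are held fixed. This collapses the partial derivatives of $F$ to discrete differences of $f$, which the hypotheses then sign directly. Throughout, note that for $\vect{x}\in[0,1]^n$ every product of the form $\prod_{q\in\mathcal{S}}x_q\prod_{q\notin\mathcal{S}}(1-x_q)$ is non-negative.

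\textbf{Monotonicity.} I would fix $p\in\mathcal{P}$ and regroup the $2^{|\mathcal{P}|}$ summands of~\eqref{eq::F_determin} according to whether $p\in\mathcal{R}$, writing $\mathcal{R}=\mathcal{S}$ or $\mathcal{R}=\mathcal{S}\cup\{p\}$ with $\mathcal{S}\subset\mathcal{P}\setminus\{p\}$. This puts $F$ in the form
\[
F(\vect{x})=\sum_{\mathcal{S}\subset\mathcal{P}\setminus\{p\}}\big(x_p\,f(\mathcal{S}\cup\{p\})+(1-x_p)\,f(\mathcal{S})\big)\,m_{\mathcal{S}}(\vect{x}),
\]
where $m_{\mathcal{S}}(\vect{x})=\prod_{q\in\mathcal{S}}x_q\prod_{q\in\mathcal{P}\setminus(\mathcal{S}\cup\{p\})}(1-x_q)$ does not involve $x_p$ and is non-negative on $[0,1]^n$. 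Differentiating in $x_p$ gives
\[
\frac{\partial F}{\partial x_p}=\sum_{\mathcal{S}\subset\mathcal{P}\setminus\{p\}}\Delta_f(p|\mathcal{S})\,m_{\mathcal{S}}(\vect{x}),
\]
and each $\Delta_f(p|\mathcal{S})\ge 0$ because $f$ is monotone increasing, so $\partial F/\partial x_p\ge 0$ everywhere on $[0,1]^n$.

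\textbf{Submodularity.} By multilinearity $\partial^2 F/\partial x_p^2=0$, so only $p\neq q$ needs attention. Fix distinct $p,q$ and regroup~\eqref{eq::F_determin} by the joint membership of $p$ and $q$, i.e.\ over $\mathcal{S}\subset\mathcal{P}\setminus\{p,q\}$ with $\mathcal{R}$ ranging over $\mathcal{S},\,\mathcal{S}\cup\{p\},\,\mathcal{S}\cup\{q\},\,\mathcal{S}\cup\{p,q\}$; the coefficient of the bilinear monomial $x_p x_q$ inside the $\mathcal{S}$-block is
\[
f(\mathcal{S}\cup\{p,q\})-f(\mathcal{S}\cup\{p\})-f(\mathcal{S}\cup\{q\})+f(\mathcal{S})=\Delta_f(p|\mathcal{S}\cup\{q\})-\Delta_f(p|\mathcal{S}).
\]
Differentiating once in $x_p$ and once in $x_q$ leaves only these terms, so $\partial^2 F/\partial x_p\partial x_q=\sum_{\mathcal{S}\subset\mathcal{P}\setminus\{p,q\}}\big(\Delta_f(p|\mathcal{S}\cup\{q\})-\Delta_f(p|\mathcal{S})\big)\,\tilde{m}_{\mathcal{S}}(\vect{x})$, where $\tilde{m}_{\mathcal{S}}(\vect{x})=\prod_{r\in\mathcal{S}}x_r\prod_{r\in\mathcal{P}\setminus(\mathcal{S}\cup\{p,q\})}(1-x_r)\ge 0$ on $[0,1]^n$ and is free of $x_p,x_q$. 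Since $\mathcal{S}\subset\mathcal{S}\cup\{q\}$ and $p\notin\mathcal{S}\cup\{q\}$, submodularity~\eqref{eq::submodularity} makes every bracket $\le 0$, hence $\partial^2 F/\partial x_p\partial x_q\le 0$ everywhere on $[0,1]^n$.

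The only real work is the regrouping bookkeeping: one must verify that after fixing the membership status of the one (resp.\ two) differentiated coordinates, the residual monomials are exactly products of $x_r$ and $(1-x_r)$ over the remaining ground-set elements — so that they are non-negative on the hypercube and carry no dependence on the differentiated variables — and that the surviving coefficients are precisely the first difference $\Delta_f(p|\mathcal{S})$ and the mixed second difference above. There is no analytic obstacle beyond this; the sign conclusions follow verbatim from the definitions of monotone increasing and submodular. An equivalent shortcut is to write $F(\vect{x})$ as the expected value of $f(\mathcal{R}_{\vect{x}})$ and obtain the same differences by conditioning on the statuses of $p$ and $q$, but I would keep the explicit-sum version to avoid importing probabilistic notation into the argument.
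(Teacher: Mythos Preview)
Your argument is correct: the regrouping by the membership status of $p$ (resp.\ of $p$ and $q$) is exactly how one extracts the first and mixed second differences of $f$ from~\eqref{eq::F_determin}, and the sign conclusions then follow directly from monotonicity and~\eqref{eq::submodularity}. The bookkeeping you flag is routine and your handling of the diagonal case $\partial^2F/\partial x_p^2=0$ via multilinearity is fine.

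As for comparison: the paper does not give its own proof of this lemma --- it is stated with a citation to~\cite{JV:08}. The closest thing in the paper to an argument is the pair of identities~\eqref{eq::firstDer}--\eqref{eq::secondDer}, which express $\partial F/\partial x_p$ and $\partial^2F/\partial x_p\partial x_q$ as expectations of the same first and second discrete differences you obtain; from those formulas the signs are immediate. That is precisely the probabilistic ``shortcut'' you mention at the end. So your explicit-sum derivation and the paper's expectation formulas are two presentations of the same computation: yours makes the non-negative weights $m_{\mathcal{S}},\tilde m_{\mathcal{S}}$ visible combinatorially, while the expectation form packages them as a probability distribution over $\mathcal{R}_{\vect{x}}$. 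Neither buys more generality; the expectation form is shorter once~\eqref{eq::firstDer}--\eqref{eq::secondDer} are in hand, while your version is self-contained from~\eqref{eq::F_determin}.
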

 An alternative  way to perceive $F$ is to observe that it is indeed equivalent to \begin{align}\label{eq::F_stoc}
   F(\vect{x})=\mathbb{E}[f(\mathcal{R}_{\vect{x}})],  
 \end{align} where $\mathcal{R}_{\vect{x}}\subset \mathcal{P}$ is a set where each element $p\in\mathcal{R}_{\vect{x}}$ appears independently with the probabilities $x_p$. Then, it can be shown that taking the derivatives of $F(\vect{x})$ yields 
\begin{align}\label{eq::firstDer}
\frac{\partial F}{\partial x_p} (\vect{x})= \mathbb{E}[f(\mathcal{R}_{\vect{x}} \cup \{p\})-f(\mathcal{R}_{\vect{x}} \setminus \{p\})],
\end{align}
and
\begin{align}\label{eq::secondDer}
& \frac{\partial^2 F}{\partial x_p \partial x_q}(\vect{x}) = \mathbb{E}[f(\mathcal{R}_{\vect{x}} \cup \{p,q\})-f(\mathcal{R}_{\vect{x}} \cup \{q\} \setminus \{p\})\nonumber\\
&\quad\quad \quad \quad-f(\mathcal{R}_{\vect{x}} \cup \{p\} \setminus \{q\})+f(\mathcal{R}_{\vect{x}} \setminus \{p,q\})].
\end{align}

Constructing $F(\vect{x})$ in~\eqref{eq::F_determin} requires the knowledge of $f(\mathcal{R})$ for all $\mathcal{R} \subset \mathcal{P}$, which becomes computationally intractable when the size of ground set $\mathcal{P}$ increases. However, with the stochastic interpretation~\eqref{eq::F_stoc} of the multilinear extension and its derivatives, if enough samples are drawn according to $\vect{x}$, we can obtain an estimate of $F(\vect{x})$ with a reasonable computational cost.  We can use Chernoff-Hoeffding's inequality to quantify the quality of this estimation given the number of samples.
\begin{thm}[Chernoff-Hoeffding's inequality~\cite{WH:94}]\label{thm::sampling}
Consider a set of $K$ independent random variables $X_1,...,X_K$ where $a<X_i<b$. Letting $S_K=\SUM{i=1}{K} X_i$, then
\begin{align*}
    \mathbb{P}[|S_K - \mathbb{E}[S_K]|>\delta] < 2  \textup{e}^{-\frac{2\delta ^2}{K(b-a)^2}}.
\end{align*}
\end{thm}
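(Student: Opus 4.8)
The plan is to establish the bound via the classical exponential-moment (Chernoff) argument: reduce the two-sided tail estimate to a one-sided one, factor the moment generating function using independence, and control each factor with Hoeffding's lemma.

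First I would center the variables, setting $Y_i = X_i - \mathbb{E}[X_i]$ so that $\mathbb{E}[Y_i]=0$ and each $Y_i$ takes values in an interval of length $b-a$, and writing $\bar{S}_K = S_K - \mathbb{E}[S_K] = \sum_{i=1}^{K} Y_i$. It suffices to bound $\mathbb{P}[\bar{S}_K > \delta]$, because the opposite tail $\mathbb{P}[\bar{S}_K < -\delta]$ follows by running the identical argument on $-Y_i$ (again zero-mean, with range of the same length $b-a$), and the two-sided estimate then follows from a union bound, which is exactly where the factor $2$ enters.

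Next, for an arbitrary $t>0$ I would apply Markov's inequality to the nonnegative random variable $\mathrm{e}^{t\bar{S}_K}$ and then use independence to factor:
\[
\mathbb{P}[\bar{S}_K > \delta] = \mathbb{P}\!\left[\mathrm{e}^{t\bar{S}_K} > \mathrm{e}^{t\delta}\right] \le \mathrm{e}^{-t\delta}\,\mathbb{E}\!\left[\mathrm{e}^{t\bar{S}_K}\right] = \mathrm{e}^{-t\delta}\prod_{i=1}^{K}\mathbb{E}\!\left[\mathrm{e}^{tY_i}\right].
\]
Then I would invoke Hoeffding's lemma: for a zero-mean random variable $Y$ supported on an interval of length $\ell$ one has $\mathbb{E}[\mathrm{e}^{tY}] \le \mathrm{e}^{t^2\ell^2/8}$. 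With $\ell = b-a$ this yields $\mathbb{P}[\bar{S}_K > \delta] \le \mathrm{e}^{-t\delta + K t^2 (b-a)^2/8}$ for every $t>0$. Minimizing the exponent over $t$ --- the minimizer is $t^\star = 4\delta/\big(K(b-a)^2\big)$ --- gives $\mathbb{P}[\bar{S}_K > \delta] \le \mathrm{e}^{-2\delta^2/(K(b-a)^2)}$, and combining with the symmetric lower-tail bound completes the proof; the strictness of the final inequality is inherited from the strict bounds $a < X_i < b$.

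The only ingredient beyond the routine Chernoff recipe is Hoeffding's lemma, so that is the one step I expect to need care; if a self-contained derivation is preferred to a citation, I would set $\psi(t) = \log \mathbb{E}[\mathrm{e}^{tY}]$, verify $\psi(0) = \psi'(0) = 0$, bound $\psi''(t) \le \ell^2/4$ by noting that $\psi''(t)$ is the variance of the exponentially tilted law, which is again supported on an interval of length $\ell$ and hence has variance at most $\ell^2/4$ (Popoviciu's inequality), and then Taylor-expand to conclude $\psi(t) \le t^2\ell^2/8$. Everything else --- the centering, the Markov step, the independence factorization, and the one-dimensional optimization over $t$ --- is mechanical.
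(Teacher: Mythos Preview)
Your argument is the standard and correct derivation of the Chernoff--Hoeffding bound: center, apply Markov to the exponential moment, factor by independence, invoke Hoeffding's lemma, optimize in $t$, and union-bound the two tails. The paper itself does not prove this statement at all; it is quoted as a known result with a citation to Hoeffding's original work, so there is no in-paper proof to compare against. One small caveat: your justification of the \emph{strict} inequality ``inherited from the strict bounds $a<X_i<b$'' is a bit glib---the usual formulation gives $\le$, and upgrading to $<$ requires observing that equality in the Markov/Hoeffding-lemma steps forces degenerate distributions, which the strict support assumption rules out; you may want to spell that out or simply state the bound with $\le$, which is what is actually used downstream in the paper.
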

Problems such as sensor placement with a limited number of sensors and ample places of possible sensor placement can be formulated as a set value optimization  \eqref{eq::mainPrbIntro} where the feasible set constraint $\mathcal{I}$ is in the form of \emph{matroid}. A matroid is defined as the pair $\mathcal{M}=\{\mathcal{P},\mathcal{I}\}$ with $\mathcal{I} \subset 2^{\mathcal{P}}$ such that
\begin{itemize}
    \item $\mathcal{B} \in \mathcal{I}$ and $\mathcal{A} \subset \mathcal{B}$ then $\mathcal{A} \in \mathcal{I}$
    \item $\mathcal{A},\mathcal{B} \in \mathcal{I}$ and $|\mathcal{B}|>|\mathcal{A}|$, then there exists $x \in \mathcal{B} \setminus \mathcal{A}$ such that $\mathcal{A}\cup{x} \in \mathcal{I}$
\end{itemize}
One of the known matroids is uniform matroid $\mathcal{I} = \{ \mathcal{R} \subset \mathcal{P}, |\mathcal{R}|\leq k \}.$
A sensor placement scenario where there are $k$ sensors to be placed in locations selected out of possible locations $\mathcal{P}$ can be described by a uniform matroid. On the other hand, when there is a set of agents $\mathcal{A}$, each with multiple sensors $k_i$, $i\in\mathcal{A}$, that they can place in a set of specific and non-overlapping part $\mathcal{P}_i$ of a discrete space, this constrained can be described by the partitioned matroid $\mathcal{M}=\{\mathcal{P},\mathcal{I}\}$, where $\mathcal{I} = \{ \mathcal{R} \subset \mathcal{P}, |\mathcal{R} \cap \mathcal{P}_i|\leq k_i\}$ with  $\bigcup\limits_{i=1}^{N}  \mathcal{P}_i =  \mathcal{P}$ and $ \mathcal{P}_i \cap  \mathcal{P}_j = \emptyset,\,\, i\neq j$. 

A \emph{matroid polytop} is a convex hull defined as 
\begin{align*}
    P(\mathcal{M})=\{ \vect{x} \in \realnonnegative^{|\mathcal{P}|},\forall \mathcal{Q} \in \mathcal{P}, \SUM{p \in \mathcal{Q}}{} x_p \leq r_{\mathcal{M}}(\mathcal{Q})\}
\end{align*}
with $r_{\mathcal{M}}$ to be the rank function of the matroid defined as
\begin{align*}
    r_{\mathcal{M}}(\mathcal{Q}) = \textup{max}\{|\mathcal{S}|,\mathcal{S}\subset \mathcal{Q},\mathcal{S} \in \mathcal{I}\}.
\end{align*}
For the partition matroid with $k_i=1$ which is the focus of this paper, the convex hull is such that for $\vect{x}=[\vect{x}_1^\top,...,\vect{x}_N^\top]^\top$ with $\vect{x}_i \in \realnonnegative^{|\mathcal{P}_i|}$ associated with membership probability of policies in $\mathcal{P}_i$, we have $\vect{1}^\top \vect{x}_i  \leq 1$, i.e., 
\begin{align}\label{eq::convexhull}P(\mathcal{M})=\{ \vect{x} \in \realnonnegative^{|\mathcal{P}|}\,\big|\,\sum\nolimits_{p=1}^{|\mathcal{P}_i|}x_{ip}\leq1, \forall i\in\mathcal{A}\}.
\end{align}

\begin{figure}
\centering
    {\includegraphics[width=.90\textwidth]{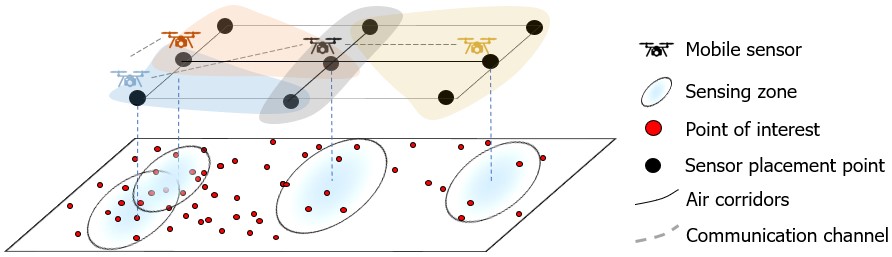}}%
    \caption{{\small   
    Let the policy set of each mobile sensor $i\in\mathcal{A}$ be $\mathcal{P}_i=\{(i,p)|p\in \mathcal{B}_i \}$, where $\mathcal{B}_i \subset \mathcal{B}$ is the set of the allowable sensor placement points for agent $i\in\mathcal{A}$ out of all the sensor placement points $\mathcal{B}$. Note that by definition, for any two agent $i,j\in\mathcal{A}$, $\mathcal{P}_i\cap \mathcal{P}_j=\emptyset$. The sensors are heterogeneous, in the sense that the size of their sensing zone is different. The objective is to place the sensors in points in $\mathcal{B}$ such that the total number of the observed points of interest is maximized. The utility function, the sum of observed points, is known to be a monotone and increasing submodular function of the agent's sensing zone~\cite{CC-AK:04}. This sensor placement problem can be formalized as the optimization problem~\eqref{eq::mainProblem}. The agents are communicating over a connected undirected graph and their objective is to obtain their respective placement points by interacting only with their communicating neighbors.}}
    \label{fig:city_partition}%
\end{figure}

\section{Problem Statement}
We consider a group of $\mathcal{A}=\{1,...,N\}$ with communication and computation capabilities interacting over a connected undirected graph $\mathcal{G}(\mathcal{A},\mathcal{E})$. These agents aim to solve in a distributed manner the optimization problem
\begin{subequations}\label{eq::mainProblem}
\begin{align}
    &\underset{\mathcal{R} \in \mathcal{I}}{\textup{max}}f(\mathcal{R}) \quad \textup{s.t.}\\
    \mathcal{I} = \big\{ &\mathcal{R} \subset \mathcal{P}\,\big|\,\, |\mathcal{R} \cap \mathcal{P}_i|\leq 1,~ i\in\mathcal{A}\big\},\label{eq::mainProblem_b}
\end{align}
\end{subequations}
where utility function $f:2^\mathcal{P}\to\real_{\geq0}$ is  monotone increasing and submodular set function over the discrete policy space  $\mathcal{P} =\bigcup\limits_{i=1}^{N}  \mathcal{P}_i $, with $\mathcal{P}_i$ being the policy space of agent $i\in\mathcal{A}$, which is only known to agent $i$. In
this paper, we work in the value oracle model where the
only access to the utility function is through a black box
returning $f(\mathcal{R})$ for a given set $\mathcal{R}$. Every agent can obtain the value of the utility function at any subset $\mathcal{R}\in\mathcal{P}$.
The constraint~\eqref{eq::mainProblem_b} is the partitioned matroid $\mathcal{M}=\{\mathcal{P},\mathcal{I}\}$, which ensures that only one policy per agent is selected from each local policy set $\mathcal{P}_i$, $i\in\mathcal{A}$. An example application scenario of our problem of interest is shown in Fig.~\ref{fig:city_partition}. Without loss of generality, to simplify notation, we assume that the policy space is $\mathcal{P} = \{1,...,n\},$ and is sorted agent-wise with $1 \in \mathcal{P}_1 $ and $n \in \mathcal{P}_N$.
 
 Finding the optimal solution $\mathcal{R}^\star \in \mathcal{I}$ of~\eqref{eq::mainProblem} even in central form is NP-Hard \cite{MC-GC:84}. The computational time of finding the optimizer set increases exponentially with $N$~\cite{NR-SSK:19}. The well-known sequential greedy algorithm finds a suboptimal solution $\mathcal{R}_{\textup{SG}}$ for~\eqref{eq::mainProblem} with the optimality bound of $f(\mathcal{R}_{\textup{SG}})\geq \frac{1}{2} f(\mathcal{R}^\star)$, i.e., a $1/2$-approximation at worst case~\cite{MC-GC:84}. More recently, by using the multilinear extension of the submodular utility functions, Vondr{\'a}k~\cite{JV:08} developed a randomized centralized continuous greedy
algorithm which achieves a $(1 -1/\text{e})-O(1/T)$-approximate for set value optimization~\eqref{eq::mainProblem} in the value oracle model, see Algorithm~\ref{alg:the_alg_cent}\footnote{Pipage rounding is a polynomial time algorithm which moves a fractional point $\vect{x}$ on a hypercube to a integral point $\hat{\vect{x}}$ on the same hypercube such that $f(\hat{\vect{x}})\geq f(\vect{x})$}. Our objective in this paper is to develop a distributed implementation of Algorithm~\ref{alg:the_alg_cent} to solve~\eqref{eq::mainProblem} for when agents interact over a connected graph $\mathcal{G}$. Recall that in our problem setting, every agent $i\in\mathcal{A}$ can evaluate the utility function for a given $\mathcal{R}\subset\mathcal{P}$ but it has access only to its own policy space $\mathcal{P}_i$.

\begin{algorithm}[t]
\caption{Practical implementation of the continuous greedy process~\cite{JV:08}.} 

\label{alg:the_alg_cent}
{
\begin{algorithmic}[1]
\State $\vect{x} \gets \vect{0},\,\, \mathbf{\text{Init:}} ~ t \gets 1$,
\While{$t\leq T$}
\State Draw $K$ samples of $\mathcal{R}$ from $\mathcal{P}$ according to $\vect{x}$
\For{$p\in \mathcal{P}$}
 \State Estimate $w_p \sim \mathbb{E}[f(\mathcal{R}_{\vect{x}} \cup \{p\})-f(\mathcal{R}_{\vect{x}} \setminus \{p\})]$
 \EndFor
    \State Solve for $\mathcal{R}^\star=\underset{\mathcal{R} \in \mathcal{I}}{\textup{argmax}} \,\, \SUM{p\in \mathcal{R}}{} w_p.$
\State Update membership vector as $\vect{x} \gets \vect{x}+\frac{1}{T}\vect{1}_{\mathcal{R}^\star}$
\State $t \gets t+1$
\EndWhile
\State Use Pipage rounding to convert the fractional solution $\vect{x}$ to an integral solution.
\end{algorithmic}
{Note: $\vect{1}_{\mathcal{R}^\star}$ is the $\vect{v}$ in~\eqref{eq::xv}. }
}
\end{algorithm}  
 
\section{A polynomial-time distributed multi-agent randomized continuous greedy algorithm}\label{sec::Main}
Our proposed distributed multi-agent randomized continuous greedy algorithm to solve the set value optimization problem~\eqref{eq::mainProblem} over a connected graph $\mathcal{G}$ is given in Algorithm~\ref{alg:the_alg_decen_2}, whose convergence guarantee and suboptimality gap is given in Theorem~\ref{thm::main} below. To provide the insight into construction of Algorithm~\ref{alg:the_alg_decen_2}, 
we first review the idea behind the central suboptimal solution via Algorithm~\ref{alg:the_alg_cent} following~\cite{JV:08}. We also provide some intermediate results that will be useful in establishing our results.

\subsection{A short overview of the central continuous greedy process}
As we mentioned earlier the continuous multilinear extension is a relaxation strategy that extends a submodular function $f(\mathcal{R})$, which is defined on the vertices of the $n-$dimensional hypercube $\{0,1\}^n$ to a continuous multilinear function $F$ defined on $[0, 1]^n$. The two functions evaluate identically for any vector $\vect{x}\in [0, 1]^n$ that is the membership indicator vector of a set $\mathcal{R}\in\mathcal{P}$. Then, by way of a process that runs continuously, depending only
on local properties of $F$, we can produce a point $\vect{x} \in P(\mathcal{M})$\ to approximate
the optimum $OPT = \underset{ \vect{x} \in P(\mathcal{M})}{\textup{max}}F(\vect{x})$ (here recall~\eqref{eq::convexhull}). The proposal in~\cite{JV:08} is to move in the direction of a vector constrained by
$P(\mathcal{M})$ which maximizes the local gain. To understand the logic behind Algorithm~\ref{alg:the_alg_cent} let us review the conceptual steps of this continuous greedy process.~\cite{JV:08} views the process as a particle starting at $\vect{x}(0)=\vect{0}$ and
following the flow 
\begin{align}\label{eq::VNabF}
\frac{\text{d}\vect{x}}{\text{d}t}=\vect{v}(\vect{x})~~\text{where}~~ \vect{v}(\vect{x})=\underset{\vect{v}\in P(\mathcal{M})}{\arg\max}(\vect{v}.\nabla F(\vect{x})).
\end{align} 
over a unit time interval $[0,1]$.
We note that $\vect{x}(t)$ for $t\in[0,1]$ is contained in $P(\mathcal{M})$, since it is a convex combination of vectors in  $P(\mathcal{M})$.
Now consider a point $\vect{x}$ along the trajectory of our flow~\eqref{eq::VNabF} and assume that $\vect{x}^\star$ is the true optimum $OPT=F(\vect{x}^\star)$. Now consider a direction $\vect{v}^\star=\max\{\vect{x}^\star-\vect{x},\vect{0}\}$, which is a nonnegative vector. Because  $0\leq\vect{v}^\star\leq \vect{x}^\star\in P(\mathcal{M})$, then $\vect{v}^\star\in P(\mathcal{M})$. By virtue of Lemma~\ref{lem::prop_F}, $F$ is monotone increasing, therefore, using $\max\{\vect{x}^\star-\vect{x},\vect{0}\}=\max\{\vect{x}^\star,\vect{x}\}-\vect{x}$ we have $F(\vect{x}+\vect{v}^\star)=F(\max\{\vect{x}^\star,\vect{x}\})\geq F(\vect{x}^\star)$. However, $\vect{x}+\vect{v}^\star$ does not belong to $P(\mathcal{M})$ necessarily. 
Therefore, lets consider $F(\vect{x}+\zeta\vect{v}^\star)$ for $\zeta\geq0$. It can be shown that $F(\vect{x}+\zeta\vect{v}^\star)$ is concave in $\zeta$ and $\frac{\text{d}F}{\text{d}\zeta}$ is non-increasing. Thus, it can be established that $F(\vect{x}+\vect{v}^\star)-F(\vect{x})\leq \frac{\text{d}F}{\text{d}\zeta}|_{\zeta=0}=\vect{v}^\star.\nabla F(\vect{x})$. But, since $\vect{v}^\star\in P(\mathcal{M})$, and $ \vect{v}\in P(\mathcal{M})$ that is used to generate $\vect{v}$ maximizes $\vect{v}.\nabla F(\vect{x})$, we can write $\vect{v}.\nabla F(\vect{x})\geq\vect{v}^\star.\nabla F(\vect{x})\geq  F(\vect{x}+\vect{v}^\star)-F(\vect{x})\leq OPT-F(\vect{x})$. Now we note that $\frac{\text{d}F}{\text{d}t}=\vect{v}(\vect{x}).\nabla F(\vect{x})\geq OPT-F(\vect{x}(t)).$ Therefore, given $\vect{x}(0)=\vect{0}$, using the Comparison Lemma~\cite{HKK-JWG:02}, we can conclude the $F(\vect{x})\geq (1-\text{e}^{-t})OPT$, and thus $\vect{x}(1)\in P(\mathcal{M})$ and also $F(\vect{x}(1))\geq (1-1/\text{e})OPT$.  
In the second stage of the algorithm, the fractional solution $\vect{x}(1)$ is rounded to a point in $\{0,1\}^n$ by use of Pipage rounding method, see~\cite{AAA-MIS:04} for more details about Pipage rounding. The aforementioned exposition is the conceptual design behind the continuous greedy process. The algorithm \ref{alg:the_alg_cent}is a practical implementation achieved by the use of a numerical iterative process  
\begin{align}\label{eq::xv}
    \vect{x}(t+1)=\vect{x}(t)+\frac{1}{T}\vect{v}(t),
\end{align}
and use of sampling to compute $\nabla F(\vect{x})$ and consequently $\vect{v}(t)$, see~\cite{JV:08} for more details. In what follows, we explain a practical distributed implementation of the the continuous greedy process, which is realized as Algorithm~\ref{alg:the_alg_decen_2}, and is inspired by this central solution.

\begin{algorithm}[t]
\caption{Discrete Distributed implementation of the continuous greedy process.} 

\label{alg:the_alg_decen_2}
{\small
\begin{algorithmic}[1]
\State $\mathbf{\text{Init:}} ~\bar{\mathcal{P}} \gets \emptyset,\,\, \mathcal{F}_i \gets \emptyset,\,\, t \gets 1$,
\While{$t\leq T$}
\For{$i\in \mathcal{A}$}
\State Draw $K_i$ sample policy sets  $\mathcal{R}$ such that $q \in \mathcal{R}$ with the probability $\alpha$ for all $(q,\alpha)\in\mathcal{F}_i$.
\For{$p\in \mathcal{P}_i$}
 \State Compute $w^i_p \sim \mathbb{E}[f(\mathcal{R} \cup \{p\})-f(\mathcal{R}\setminus \{p\})]$ using the policy sample sets of step $4$.
\EndFor
\State Solve for $p^\star=\underset{p \in \mathcal{P}_i}{\textup{argmax}} \,\, w^i_p.$
\State 
$\mathcal{F}_i^-\gets \mathcal{F}_i \oplus \{(p^\star,\frac{1}{T})\}.$
\State Broadcast $\mathcal{F}_i^{-}$ to the neighbors $\mathcal{N}_i$.
\State 
$\mathcal{F}_i\gets\underset{j \in \mathcal{N}_i \cup \{i\}}{\textup{MAX}}\mathcal{F}_j^-$
\EndFor
\State $t \gets t+1$.
\EndWhile
\For{$i\in \mathcal{A}$}
    \State Sample one policy $\bar{p} \in \mathcal{P}_i$ using $\mathcal{F}_i$
    \State  $\bar{\mathcal{P}} \gets \bar{\mathcal{P}} \cup \{\bar{p}\}$ 
\EndFor
\end{algorithmic}
}
\end{algorithm}

\subsection{Design and analysis of the distributed continuous greedy process}
We start off our design and analysis of the distributed continuous greedy process by introducing our notation and the set of computations that agents carry out locally using their local information and interaction with their neighbors. The algorithm is performed over the finite time horizon of $T$ steps. Let $\mathcal{F}_i(t)\subset \mathcal{P}
\times [0,1] 
$ be the information set of agent $i$ at time step $t$, initialized at $\mathcal{F}_i(0)=\emptyset$. For, any couple $(p,\alpha)\in\mathcal{F}_i(t)$ the first element is the policy and the second element is the corresponding membership probability. We let $\vect{x}_i(t)\in\real^n$ (recall $|\mathcal{P}|=n$) be the local copy of the membership probability of our suboptimal solution of~\eqref{eq::mainProblem} at agent $i$ at time step $t$, defined according to 
\begin{align}\label{eq::x_iconstruction}
    x_{ip}(t)=\begin{cases}\alpha, & (p,\alpha) \in \mathcal{F}_i(t),\\0&\text{otherwise}, \end{cases}\qquad p\in\mathcal{P}.
\end{align}
Recall that $\mathcal{P} = \{1,...,n\}$ and it is sorted agent-wise with $1 \in \mathcal{P}_1 $ and $n \in \mathcal{P}_N $. Hence, $\vect{x}_i(t)=[\hat{\vect{x}}_{i1}^\top(t),\cdots,\vect{x}_{ii}^\top(t),\cdots,\hat{\vect{x}}_{iN}^\top(t)]^\top$ where $\vect{x}_{ii}(t)\in \realnonnegative^{|\mathcal{P}_i|}$ is the membership probability vector of agent $i$'s own policy at iteration $t$, while $\hat{\vect{x}}_{ij}(t) \in \realnonnegative^{|\mathcal{P}_j|}$ is the local  estimate of the membership probability vector of agent $j$ by agent $i$. 
At time step $t$ agent $i$ solves the optimization problem
\begin{align}\label{eq::updateVect}
\widetilde{\vect{v}}_i(t) = \underset{\vect{y} \in P_i(\mathcal{M})}{\textup{argmax}} \vect{y}.\widetilde{\nabla F}(\vect{x}_i(t)) 
\end{align}
 with
\begin{align}\label{eq::updateVectSpace}P_i(\mathcal{M})=\Big\{ [\vect{y}_1^\top,\cdots,\vect{y}_N^\top]^\top  \in &\realnonnegative^{n}\,\Big|\,\vect{1}^\top.\vect{y}_i\leq 1,\nonumber\\& ~~~\vect{y}_{j}=\vect{0},~j\in\mathcal{A}\backslash\{i\} \Big\}.
\end{align}
The term $\widetilde{\nabla F}(\vect{x}_i(t))$ is the estimate of $\nabla F(\vect{x}_i(t))$ which is calculated by taking $K_i$ samples of set $\mathcal{R}_{\vect{x}_i(t)}$ according to membership probability vector $\vect{x}_i(t)$. Recall~\eqref{eq::firstDer}. Hence, $\frac{\partial F}{\partial x_p}$ is estimated by averaging $f(\mathcal{R}_{\vect{x}_i(t)} \cup \{p\})-f(\mathcal{R}_{\vect{x}_i(t)} \setminus \{p\})$  over the samples. We denote the $p$th element of $\widetilde{\nabla F}(\vect{x}_i(t))$ by 
$w_p^i$, and represent it by  
\begin{equation}\label{eq::bar_w}
    w_p^i\sim \mathbb{E}[f(\mathcal{R}_{\vect{x}_i(t)} \cup \{p\})-f(\mathcal{R}_{\vect{x}_i(t)} \setminus \{p\})].
\end{equation}
We note that given the definition of $P_i(\mathcal{M})$, to compute~\eqref{eq::updateVect}, we only need $ w_p^i$ for $p\in\mathcal{P}_i$. 
\begin{rem}[{Local computation of $w_p^i$,  $p\in\mathcal{P}_i$, by agent $i$}]{
Given the definition~\eqref{eq::x_iconstruction}, we note that $w_p^i$, an estimate of $\frac{\partial F}{\partial x_p}$ can be obtained from drawing $K_i$ sample policy sets  $\mathcal{R}$ such that $q \in \mathcal{R}$ with the probability $\alpha$ for all $(q,\alpha)\in\mathcal{F}_i$ and using \begin{equation}\label{eq::decoupled_wp}
    w^i_p \sim \mathbb{E}[f(\mathcal{R} \cup \{p\})-f(\mathcal{R}\setminus \{p\})],\quad  p\in\mathcal{P}_i.
\end{equation} }
\end{rem}

Let each agent \emph{propagate} its local variable according to 
\begin{align}\label{eq::updateRule1}
    \vect{x}_i^-(t+1) = \vect{x}_i(t)+\frac{1}{T}\widetilde{\vect{v}}_i(t).
\end{align}
One can make a conceptual connection between~\eqref{eq::updateRule1} and the practical implementation of~\eqref{eq::VNabF} discussed earlier. Because the propagation is only based on local information of agent $i$, next, each agent, by interacting with its neighbors, updates its propagated $\vect{x}_i^-(t+1)$ by element-wise maximum seeking
\begin{align}\label{eq::updateRule2}
    \vect{x}_i(t+1) = \underset{j \in \mathcal{N}_i \cup \{i\}}{\textup{max}} \vect{x}_j^-(t+1).
\end{align}
Lemma~\ref{lem::maxVect} below shows that, as one expects, $\vect{x}_{ii}(t+1)=\vect{x}_{ii}^-(t+1)$, i.e., the corrected component of $\vect{x}_{i}$ corresponding to agent $i$ itself is the propagated value maintained at agent $i$, and not the estimated value of any of its neighbors. 

\begin{lem}\label{lem::maxVect}{\it
Assume that the agents follow the distributed Algorithm~\ref{alg:the_alg_decen_2}. Let $\bar{\vect{x}}(t) = \underset{i \in \mathcal{A}}{\textup{max}} \,\, \vect{x}_i(t) $ where $\vect{x}_i$ is given in~\eqref{eq::x_iconstruction}. Moreover, 
Then, $\bar{\vect{x}}(t)=[\vect{x}_{11}^\top(t),\cdots,\vect{x}_{NN}^\top(t)]^\top$ at any time step $t$.}
\end{lem}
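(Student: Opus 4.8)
The plan is to argue by induction on the time step $t$ that $\bar{\vect{x}}(t)=[\vect{x}_{11}^\top(t),\cdots,\vect{x}_{NN}^\top(t)]^\top$, and, as the natural strengthening needed to push the induction through, that for every agent $i$ the self-block of its local copy agrees with the global maximum on that block, i.e. $\vect{x}_{ii}(t)=\bar{\vect{x}}_i(t)$ where $\bar{\vect{x}}_i$ denotes the sub-vector of $\bar{\vect{x}}$ indexed by $\mathcal{P}_i$. The base case $t=0$ is immediate since all $\mathcal{F}_i(0)=\emptyset$, so every $\vect{x}_i(0)=\vect{0}$ and the claim holds trivially.

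For the inductive step, I would first observe a \emph{monotonicity-in-$t$} fact: from the propagation rule~\eqref{eq::updateRule1}, $\vect{x}_i^-(t+1)=\vect{x}_i(t)+\frac{1}{T}\widetilde{\vect{v}}_i(t)$ with $\widetilde{\vect{v}}_i(t)\in P_i(\mathcal{M})$ nonnegative and supported only on the block $\mathcal{P}_i$; hence $\vect{x}_i^-(t+1)\ge\vect{x}_i(t)$ componentwise, and in fact $\vect{x}_i^-(t+1)$ differs from $\vect{x}_i(t)$ only on the $i$th block. The key structural point is that \emph{only agent $i$ ever increments the $i$th block}: the correction step~\eqref{eq::updateRule2} takes componentwise maxima over neighbors, but no neighbor $j\neq i$ has ever added anything to its own estimate of the $\mathcal{P}_i$ block beyond what it received (transitively) from $i$. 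Made precise, I would show that at all times the $\mathcal{P}_i$ block of every $\vect{x}_j$ is dominated by $\vect{x}_{ii}$, i.e. $\hat{\vect{x}}_{ji}(t)\le\vect{x}_{ii}(t)$ for all $j$, again by induction on $t$: the inequality is preserved by the max step~\eqref{eq::updateRule2} because the only block-$i$ contributions entering any neighbor's max come from agents whose block-$i$ component is $\le\vect{x}_{ii}(t)$ by the inductive hypothesis, and agent $i$'s own propagation only increases $\vect{x}_{ii}$. Combined with the trivial reverse inequality $\vect{x}_{ii}(t)\le\bar{\vect{x}}_i(t)$, this yields $\vect{x}_{ii}(t)=\bar{\vect{x}}_i(t)$ for every $t$, and assembling these block identities over $i\in\mathcal{A}$ gives exactly $\bar{\vect{x}}(t)=[\vect{x}_{11}^\top(t),\cdots,\vect{x}_{NN}^\top(t)]^\top$.

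The main obstacle I anticipate is making the "no neighbor ever over-reports block $i$" claim airtight, because it is genuinely a statement about information propagation over the connected graph: a neighbor $j$ can hold a stale but still correct value $\hat{\vect{x}}_{ji}(t)$ that equals $\vect{x}_{ii}(t')$ for some earlier $t'<t$, and one must rule out that any spurious larger value could have been synthesized. The clean way to handle this is to carry the auxiliary invariant $\hat{\vect{x}}_{ji}(t)\le\vect{x}_{ii}(t)$ for \emph{all} pairs $j,i$ through the induction simultaneously, using that (i) the max operation in~\eqref{eq::updateRule2} is order-preserving, (ii) $\vect{x}_{ii}$ is nondecreasing in $t$, and (iii) agent $i$'s propagation~\eqref{eq::updateRule1} touches only block $i$ while every other agent's propagation leaves block $i$ untouched. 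Once this invariant is in place the lemma follows with no further computation; it is essentially the observation that maximum consensus propagates each agent's locally generated "token" without ever inflating it, and here agent $i$ is the unique source of increments to block $i$. I would also note this matches the intuition flagged in the text just before the lemma statement, so the argument should be short.
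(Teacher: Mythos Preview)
Your proposal is correct and follows essentially the same approach as the paper: the paper's proof also argues that $\widetilde{\vect{v}}_i(t)\in P_i(\mathcal{M})$ is nonnegative and supported only on block $\mathcal{P}_i$, so that $\vect{x}_{ii}(t)$ is nondecreasing, only agent $i$ ever increments block $i$, and every other agent merely copies, yielding the key invariant $\hat{x}_{jip}(t)\le x_{iip}(t)$. Your version is more explicitly structured as an induction carrying the invariant $\hat{\vect{x}}_{ji}(t)\le\vect{x}_{ii}(t)$ through~\eqref{eq::updateRule1}--\eqref{eq::updateRule2}, whereas the paper states these observations more tersely, but the content is the same.
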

The proof of Lemma~\ref{lem::maxVect} is given in Appendix A. We note that it follows from Lemma~\ref{lem::maxVect} that 
\begin{align}
\vect{x}_{jj}(t)=\vect{x}_{jj}^{-}(t), \quad j\in\mathcal{A}.
\end{align} 

In the distributed Algorithm~\ref{alg:the_alg_decen_2} at each time step $t$ each agent has its own belief on the probabilities of the policies that is not necessarily the same as the belief of the other agents. The following result establishes the difference between the belief of the agents.
\begin{prop} \label{thm::convergance}{\it
Let agents follow the distributed Algorithm \ref{alg:the_alg_decen_2}. Then,  the vectorized membership probability $\vect{x}_i(t)$ for each agent $i \in \mathcal{A}$ realized from $\mathcal{F}_i(t)$ satisfy
\begin{subequations}\label{eq::Pro51}
\begin{align}
  & 0\leq \frac{1}{N}\vect{1}.(\bar{\vect{x}}(t)-\vect{x}_i(t)) \leq \frac{1}{T} d(\mathcal{G}),\label{eq::Pro51_a}\\
    &\bar{\vect{x}}(t+1) - \bar{\vect{x}}(t) = \frac{1}{T} \SUM{i \in \mathcal{A}}{} \widetilde{\vect{v}}_i(t),\label{eq::Pro51_b}\\
    &\frac{1}{N}\vect{1}.(\bar{\vect{x}}(t+1) - \bar{\vect{x}}(t)) =\frac{1}{T}.\label{eq::Pro51_c}
\end{align}
\end{subequations}
}
\end{prop}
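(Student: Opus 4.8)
The plan is to establish the three claims \eqref{eq::Pro51_a}--\eqref{eq::Pro51_c} in reverse order of difficulty, since \eqref{eq::Pro51_c} follows immediately from \eqref{eq::Pro51_b}, and \eqref{eq::Pro51_b} is the structural backbone that makes \eqref{eq::Pro51_a} tractable. First I would unpack the definition of $\bar{\vect{x}}(t)=\max_{i\in\mathcal{A}}\vect{x}_i(t)$ and invoke Lemma~\ref{lem::maxVect}, which tells us $\bar{\vect{x}}(t)=[\vect{x}_{11}^\top(t),\cdots,\vect{x}_{NN}^\top(t)]^\top$; that is, the global ``best belief'' is the concatenation of each agent's own self-block. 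Then \eqref{eq::Pro51_b} reduces to understanding how each self-block $\vect{x}_{jj}$ evolves. By Lemma~\ref{lem::maxVect} we have $\vect{x}_{jj}(t+1)=\vect{x}_{jj}^-(t+1)$, and from the propagation rule \eqref{eq::updateRule1} together with the block structure of $\widetilde{\vect{v}}_i(t)\in P_i(\mathcal{M})$ in \eqref{eq::updateVectSpace} — whose only nonzero block is the $i$th — we get $\vect{x}_{jj}^-(t+1)=\vect{x}_{jj}(t)+\tfrac1T(\widetilde{\vect{v}}_j(t))_j$. Summing over blocks $j\in\mathcal{A}$ and observing that $\sum_{i\in\mathcal{A}}\widetilde{\vect{v}}_i(t)$ has $j$th block exactly $(\widetilde{\vect{v}}_j(t))_j$ (all other agents contribute zero there), we obtain $\bar{\vect{x}}(t+1)-\bar{\vect{x}}(t)=\tfrac1T\sum_{i\in\mathcal{A}}\widetilde{\vect{v}}_i(t)$, which is \eqref{eq::Pro51_b}. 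For \eqref{eq::Pro51_c}, I would dot both sides of \eqref{eq::Pro51_b} with $\tfrac1N\vect{1}$ and use that each $\widetilde{\vect{v}}_i(t)\in P_i(\mathcal{M})$ is actually a vertex of the simplex constraint $\vect{1}^\top\vect{y}_i\le 1$ — since it maximizes a linear functional with (generically) a strictly positive coefficient, by monotonicity of $F$ (Lemma~\ref{lem::prop_F}) the argmax puts full mass $1$ on the single best coordinate — so $\vect{1}.\widetilde{\vect{v}}_i(t)=1$ for each $i$, and hence $\tfrac1N\vect{1}.(\bar{\vect{x}}(t+1)-\bar{\vect{x}}(t))=\tfrac1N\cdot N\cdot\tfrac1T=\tfrac1T$.

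For the consensus-error bound \eqref{eq::Pro51_a}, the nonnegativity $\tfrac1N\vect{1}.(\bar{\vect{x}}(t)-\vect{x}_i(t))\ge 0$ is immediate since $\bar{\vect{x}}(t)\ge\vect{x}_i(t)$ entrywise by definition of the entrywise max, and $\vect{1}$ has nonnegative entries. The upper bound is the substantive part. The idea is that information about agent $j$'s self-block propagates outward one hop per time step under the max-consensus rule \eqref{eq::updateRule2}, so agent $i$'s estimate $\hat{\vect{x}}_{ij}(t)$ of block $j$ equals $\vect{x}_{jj}(t-\ell_{ij})$ for some delay $\ell_{ij}$ bounded by the graph distance between $i$ and $j$, hence by $d(\mathcal{G})$. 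More carefully, I would argue by tracking the max-consensus recursion: $\hat{\vect{x}}_{ij}(t)$ is the entrywise maximum over all agents $k$ of $\vect{x}_{jj}(t-\text{dist}(i,k)-\text{something})$... — the clean way is to show by induction on $t$ that $\hat{\vect{x}}_{ij}(t)=\vect{x}_{jj}(\tau)$ for some $\tau$ with $t-d(\mathcal{G})\le\tau\le t$ (the max over a monotonically non-decreasing sequence of stamped values selects the one with the largest time stamp reachable within $t$ steps, and the self-block sequence $\{\vect{x}_{jj}(\tau)\}_\tau$ is componentwise non-decreasing because each propagation step \eqref{eq::updateRule1} only adds a nonnegative increment $\tfrac1T\widetilde{\vect{v}}_j$). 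Then $\bar{x}_{jp}(t)-x_{ip}(t)=x_{jj,p}(t)-x_{jj,p}(\tau)=\tfrac1T\sum_{s=\tau}^{t-1}(\widetilde{\vect{v}}_j(s))_p$, a sum of at most $d(\mathcal{G})$ terms each in $[0,1]$ per coordinate; summing the $|\mathcal{P}_j|$ coordinates within block $j$ and using $\vect{1}.\widetilde{\vect{v}}_j(s)=1$ gives a contribution of at most $\tfrac1T d(\mathcal{G})$ per block $j$, and then averaging appropriately (the $\tfrac1N$ and the sum over the $N$ blocks) yields $\tfrac1N\vect{1}.(\bar{\vect{x}}(t)-\vect{x}_i(t))\le\tfrac1T d(\mathcal{G})$.

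The main obstacle I anticipate is making the delay/propagation argument for \eqref{eq::Pro51_a} rigorous rather than hand-wavy: one must carefully verify that under the $\oplus$ and $\textup{MAX}$ operations of Algorithm~\ref{alg:the_alg_decen_2} (equivalently, the entrywise-max update \eqref{eq::updateRule2}), each off-diagonal estimate $\hat{\vect{x}}_{ij}(t)$ is genuinely a \emph{time-delayed copy} of the true self-block $\vect{x}_{jj}$ — i.e., there is no ``mixing'' of stale partial increments from different time steps within a coordinate — and that the relevant delay never exceeds $d(\mathcal{G})$. This hinges on the monotone non-decreasing nature of $\{\vect{x}_{jj}(\tau)\}$ coordinatewise (so that a coordinatewise max over delayed copies equals the single least-delayed copy) together with a standard ``information reaches every node within $d(\mathcal{G})$ hops on a connected graph'' argument; Lemma~\ref{lem::maxVect} does the heavy lifting of pinning down the self-blocks, and I expect the bookkeeping to go through cleanly once the inductive invariant ``$\hat{\vect{x}}_{ij}(t)=\vect{x}_{jj}(\tau_{ij}(t))$ with $t-d(\mathcal{G})\le\tau_{ij}(t)\le t$'' is set up. The remaining steps are routine linear-algebra manipulation of block vectors.
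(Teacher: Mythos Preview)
Your proposal is correct and follows essentially the same route as the paper: invoke Lemma~\ref{lem::maxVect} to identify $\bar{\vect{x}}(t)$ with the stack of self-blocks $\vect{x}_{jj}(t)$, read off \eqref{eq::Pro51_b} from the block structure of the propagation rule~\eqref{eq::updateRule1} together with $\vect{x}_{jj}(t+1)=\vect{x}_{jj}^-(t+1)$, obtain \eqref{eq::Pro51_c} from $\vect{1}.\widetilde{\vect{v}}_i(t)=1$, and bound the consensus error \eqref{eq::Pro51_a} via the $d(\mathcal{G})$-hop propagation delay of max-consensus.

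The one place you work harder than necessary is the ``pure time-delayed copy'' invariant $\hat{\vect{x}}_{ij}(t)=\vect{x}_{jj}(\tau_{ij}(t))$ that you flag as the main obstacle. The paper sidesteps this entirely: it only establishes the coordinatewise sandwich
\[
x_{jjp}(t-d(\mathcal{G}))\;\le\;\hat{x}_{ijp}(t)\;\le\;x_{jjp}(t),\qquad p\in\mathcal{P}_j,
\]
(the upper bound from Lemma~\ref{lem::maxVect}, the lower bound from monotonicity of $\vect{x}_{jj}$ plus the one-hop-per-step reach of max-consensus on a connected graph), then sums over $p\in\mathcal{P}_j$ and uses the scalar telescoping identity $\vect{1}.\vect{x}_{jj}(t)-\vect{1}.\vect{x}_{jj}(t-d(\mathcal{G}))=\tfrac{d(\mathcal{G})}{T}$, which is an immediate consequence of $\vect{1}.\widetilde{\vect{v}}_j=1$. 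Summing over blocks $j$ and dividing by $N$ gives \eqref{eq::Pro51_a}. This avoids precisely the ``no mixing of stale partial increments'' bookkeeping you were concerned about; the weaker coordinatewise inequality is all that is needed.
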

The proof of Proposition~\ref{thm::convergance} is given in the Appendix A.
Because $f$ is normal and monotone increasing, we have the guarantees that $w_p^i\geq0$. Therefore, without loss of generality, we know that one realization of $\widetilde{\vect{v}}_i(t)$ in~\eqref{eq::updateVect} corresponds to $\vect{1}_{\{p^\star\}}$ where
\begin{align}\label{eq::p_star}
    p^\star=\underset{p \in \mathcal{P}_i}{\arg\max} \,\, w_p^i.
\end{align}
Next, let each agent $i\in\mathcal{A}$ propagate its information set according to
\begin{align}\label{eq::F_prop}
    \mathcal{F}_i^-(t+1)=\mathcal{F}_i(t) \oplus \left\{(p^\star,\frac{1}{T})\right\},
\end{align}
and update it using a local interaction with its neighbors according to 
\begin{align}\label{eq::F_update}
    \mathcal{F}_i(t+1)=\underset{j \in \mathcal{N}_i \cup \{i\}}{\textup{MAX}}\mathcal{F}_j^-(t+1).
\end{align}
By definition to $\oplus$ and $MAX$ operators, we have the guarantees that if $(p,\alpha_1)$, then there exists no $\alpha_2\neq\alpha_1$ that $(p,\alpha_2)\in\mathcal{F}_i$.
\begin{lem}\label{lem::cont_disc}{\it
For $\widetilde{\vect{v}}_i(t)=\vect{1}_{\{p^\star\}}$, $\vect{x}^-_i(t+1)$ and $\vect{x}_i(t+1)$ computed from, respectively,~\eqref{eq::updateRule1} and~\eqref{eq::updateRule2} are the same as $\vect{x}^-_i(t+1)$ and $\vect{x}_i(t+1)$ constructed from, respectively, $\mathcal{F}_i^-(t+1)$ and $\mathcal{F}_i(t+1)$  using~\eqref{eq::x_iconstruction}.}
\end{lem}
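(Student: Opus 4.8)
The plan is to argue by induction on the time step $t$, with inductive hypothesis that the vector $\vect{x}_i(t)$ produced by the vector recursions equals the vector obtained from $\mathcal{F}_i(t)$ through~\eqref{eq::x_iconstruction}. The base case $t=0$ is immediate: $\mathcal{F}_i(0)=\emptyset$ yields, via~\eqref{eq::x_iconstruction}, the zero vector, which is the initialization of the vector recursion. For the inductive step I would match the propagation step~\eqref{eq::updateRule1}/\eqref{eq::F_prop} and the max-consensus step~\eqref{eq::updateRule2}/\eqref{eq::F_update} separately.

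For the propagation step, fix an agent $i$ and let $p^\star\in\mathcal{P}_i$ be as in~\eqref{eq::p_star}, so $\widetilde{\vect{v}}_i(t)=\vect{1}_{\{p^\star\}}$. Unwinding the definition of $\oplus$, the set $\mathcal{F}_i^-(t+1)=\mathcal{F}_i(t)\oplus\{(p^\star,\tfrac1T)\}$ leaves every pair $(q,\gamma)\in\mathcal{F}_i(t)$ with $q\neq p^\star$ untouched, replaces the unique pair $(p^\star,\gamma)\in\mathcal{F}_i(t)$ — unique by the property recorded just before the lemma — by $(p^\star,\gamma+\tfrac1T)$ when such a pair exists, and otherwise inserts $(p^\star,\tfrac1T)$. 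Read through~\eqref{eq::x_iconstruction}, this says the vector built from $\mathcal{F}_i^-(t+1)$ is the vector built from $\mathcal{F}_i(t)$ with its $p^\star$ coordinate increased by $\tfrac1T$ and all other coordinates unchanged; by the inductive hypothesis this is exactly $\vect{x}_i(t)+\tfrac1T\vect{1}_{\{p^\star\}}=\vect{x}_i^-(t+1)$. The same reasoning applies to every agent (each with its own maximizer), so for each $j$ the vector built from $\mathcal{F}_j^-(t+1)$ equals $\vect{x}_j^-(t+1)$.

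For the max-consensus step, I would show that converting $\mathcal{F}_i(t+1)=\textup{MAX}_{j\in\mathcal{N}_i\cup\{i\}}\mathcal{F}_j^-(t+1)$ through~\eqref{eq::x_iconstruction} reproduces $\max_{j\in\mathcal{N}_i\cup\{i\}}\vect{x}_j^-(t+1)$ from~\eqref{eq::updateRule2}. Fix a policy $p\in\mathcal{P}$. By the definition of the $\textup{MAX}$ operator, a pair $(p,\gamma)$ lies in $\mathcal{F}_i(t+1)$ precisely when $p$ occurs as a first component in $\bar{\mathcal{F}}=\bigcup_{j\in\mathcal{N}_i\cup\{i\}}\mathcal{F}_j^-(t+1)$, and then $\gamma=\max\{\alpha:(p,\alpha)\in\bar{\mathcal{F}}\}$; hence the $p$th coordinate of the vector built from $\mathcal{F}_i(t+1)$ is that $\gamma$ (if $p$ occurs somewhere) or $0$ (if not). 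On the vector side, the $p$th coordinate of $\vect{x}_j^-(t+1)$ is the stored probability $\alpha$ when $(p,\alpha)\in\mathcal{F}_j^-(t+1)$ and $0$ when $p$ is absent from $\mathcal{F}_j^-(t+1)$; since every stored probability is a positive multiple of $\tfrac1T$ (it is accumulated from $\oplus$-additions of $\tfrac1T$), the value $0$ contributed by an absent entry never exceeds a genuine stored value, so the element-wise maximum over $j$ equals $\gamma$ if $p$ occurs in some $\mathcal{F}_j^-(t+1)$ and $0$ otherwise, matching~\eqref{eq::x_iconstruction} applied to $\mathcal{F}_i(t+1)$. This yields $\vect{x}_i(t+1)$ from~\eqref{eq::updateRule2} equal to the vector built from $\mathcal{F}_i(t+1)$, closing the induction.

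This is essentially a bookkeeping argument, so I do not expect a genuine obstacle; the two points to handle with care are (i) the well-definedness of~\eqref{eq::x_iconstruction}, i.e., that no policy is ever stored with two distinct probabilities, which is exactly the property noted immediately before the statement, and (ii) in the max-consensus step, ensuring that an ``absent policy'' (contributing $0$) is never selected over a genuine positive entry, which follows from nonnegativity of all stored probabilities. Everything else is a direct transcription of the $\oplus$ and $\textup{MAX}$ definitions into coordinate statements about~\eqref{eq::x_iconstruction}.
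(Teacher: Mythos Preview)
Your proposal is correct and is essentially the same approach as the paper's: the paper's proof consists of the single sentence ``The proof follows trivially from the definition of the operator $\oplus$ and~\eqref{eq::x_iconstruction},'' and your induction simply carries out that definitional unwinding in detail. If anything, you are more careful than the paper, since you also treat the $\textup{MAX}$ step explicitly and flag the two points (uniqueness of stored probabilities per policy, and nonnegativity so that absent entries never dominate) that make the coordinate-wise translation go through.
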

\begin{proof}[Proof of Lemma \ref{lem::cont_disc}]
The proof follows trivially from the definition of the operator $\oplus$  and~\eqref{eq::x_iconstruction}.
\end{proof}
Initialized by $\mathcal{F}_i(0)=\emptyset$, $i\in\mathcal{A}$,~\eqref{eq::p_star},~\eqref{eq::F_prop}, and~\eqref{eq::F_prop} where $w_p^i$ is computed via~\eqref{eq::decoupled_wp} constitute a distributed iterative process, formally stated by Algorithm~\ref{alg:the_alg_decen_2}, that runs for $T$ steps. At the end of these $T$ steps, as stated in Algorithm~\ref{alg:the_alg_decen_2}, each agent $i\in\mathcal{A}$, obtains its suboptimal policy $\bar{\mathcal{P}}_i$ by sampling one policy $\bar{p} \in \mathcal{P}_i$ with the probability given by $\vect{x}_{ii}(T)$, where for $p\in\mathcal{P}_i$,  $$x_{ip}(T)=\begin{cases}\alpha, &  (p,\alpha) \in \mathcal{F}_i(T), \\0&\text{otherwise}.\end{cases}$$

 The following result, whose proof is available in the Appendix A, gives the convergence guarantee and suboptimality gap of Algorithm~\ref{alg:the_alg_decen_2}.

\begin{thm}[Convergence guarantee and suboptimality gap of Algorithm~\ref{alg:the_alg_decen_2}]\label{thm::main}
Let $f:2^{\mathcal{P}} \to \realnonnegative$ be normalized, monotone increasing and submodular set function. Let $\mathcal{S}^\star$ to be the optimizer of problem~\eqref{eq::mainProblem}. Then, the admissible policy set $\bar{\mathcal{P}}$, the output of distributed Algorithm \ref{alg:the_alg_decen_2}, satisfies
\begin{align*}
   \left (1\!-\!\frac{1}{\textup{e}}\right)\left(1\!-\!\left(2 N^2d(\mathcal{G})\!+\!\frac{1}{2}N^2+N\right)\frac{1}{T}\right)f(\mathcal{S}^\star)\!\leq\! \mathbb{E}[f(\bar{\mathcal{P}})],
\end{align*}
 with the probability of $\left(\prod_{i\in\mathcal{A}} (1-2\textup{e}^{-\frac{1}{8T^2}K_j})^{|\mathcal{P}_i|}\right)^T$.
\end{thm}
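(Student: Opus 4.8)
The strategy is to mimic Vondr\'ak's continuous-greedy analysis for the averaged trajectory $\bar{\vect{x}}(t)$, but carefully accounting for the discrepancies introduced by distributedness (the belief mismatch quantified in Proposition~\ref{thm::convergance}), by sampling (quantified via Theorem~\ref{thm::sampling}), and by the final rounding step. First I would set up the key telescoping identity: by Proposition~\ref{thm::convergance}\eqref{eq::Pro51_b}, $\bar{\vect{x}}(t+1)-\bar{\vect{x}}(t)=\frac{1}{T}\sum_{i\in\mathcal{A}}\widetilde{\vect{v}}_i(t)$, and since $F$ is multilinear (hence its restriction to any line is a polynomial with nonnegative-in-the-right-direction curvature controlled by Lemma~\ref{lem::prop_F}), I would write a second-order Taylor-type lower bound
\begin{align*}
F(\bar{\vect{x}}(t+1))-F(\bar{\vect{x}}(t))\ \geq\ \Big(\tfrac{1}{T}\textstyle\sum_{i}\widetilde{\vect{v}}_i(t)\Big).\nabla F(\bar{\vect{x}}(t))\ -\ (\text{second-order error}),
\end{align*}
where the second-order error is bounded using $\frac{\partial^2 F}{\partial x_p\partial x_q}\leq 0$ together with $\|\nabla F\|_\infty\leq f(\mathcal{S}^\star)$ (a standard consequence of monotonicity and submodularity, or directly $M$-Lipschitzness of $\nabla F$ with $M$ bounded by $f(\mathcal{S}^\star)$), giving an $O(N^2/T^2)$ per-step loss that sums to $O(N^2/T)$.

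The core inequality to establish next is the analogue of $\frac{dF}{dt}\gtrsim OPT-F$: I want $\big(\sum_i\widetilde{\vect{v}}_i(t)\big).\nabla F(\bar{\vect{x}}(t))\geq f(\mathcal{S}^\star)-F(\bar{\vect{x}}(t)) - (\text{errors})$. The idealized argument (replicated from the excerpt's overview) uses $\vect{v}^\star=\max\{\vect{1}_{\mathcal{S}^\star}-\bar{\vect{x}}(t),\vect{0}\}$, monotonicity, and concavity along rays to get $\vect{v}^\star.\nabla F(\bar{\vect{x}}(t))\geq f(\mathcal{S}^\star)-F(\bar{\vect{x}}(t))$. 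Since $\vect{v}^\star\in P(\mathcal{M})$ decomposes as $\vect{v}^\star=\sum_i\vect{v}^\star_i$ with $\vect{v}^\star_i\in P_i(\mathcal{M})$, and each agent $i$ chooses $\widetilde{\vect{v}}_i(t)$ to maximize $\vect{y}.\widetilde{\nabla F}(\vect{x}_i(t))$ over $P_i(\mathcal{M})$, I would chain three comparisons: (i) $\widetilde{\vect{v}}_i(t).\widetilde{\nabla F}(\vect{x}_i(t))\geq \vect{v}^\star_i.\widetilde{\nabla F}(\vect{x}_i(t))$ by optimality; (ii) replace $\widetilde{\nabla F}(\vect{x}_i(t))$ by $\nabla F(\vect{x}_i(t))$, paying the sampling error $\delta=\tfrac{1}{2T}\cdot(\text{something})$ per coordinate on each side — this is where Theorem~\ref{thm::sampling} is invoked with $K_i$ samples, $b-a\leq f(\mathcal{S}^\star)$, producing the stated failure probability $(\prod_i(1-2\mathrm{e}^{-K_j/(8T^2)})^{|\mathcal{P}_i|})^T$ after a union bound over $T$ steps, $N$ agents, and $|\mathcal{P}_i|$ coordinates; (iii) replace $\nabla F(\vect{x}_i(t))$ by $\nabla F(\bar{\vect{x}}(t))$, paying a Lipschitz error proportional to $\|\bar{\vect{x}}(t)-\vect{x}_i(t)\|_1$, which by Proposition~\ref{thm::convergance}\eqref{eq::Pro51_a} is at most $\tfrac{N}{T}d(\mathcal{G})$ in the relevant weighted sense, times $M\leq f(\mathcal{S}^\star)$; summing over $i$ contributes the $2N^2d(\mathcal{G})/T$ term.

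Combining, I get a recursion $F(\bar{\vect{x}}(t+1))\geq F(\bar{\vect{x}}(t))+\tfrac{1}{T}\big(f(\mathcal{S}^\star)-F(\bar{\vect{x}}(t))\big)-\tfrac{C}{T^2}f(\mathcal{S}^\star)$ with $C=O(N^2d(\mathcal{G}))$; unrolling from $F(\bar{\vect{x}}(0))=0$ over $T$ steps gives $F(\bar{\vect{x}}(T))\geq(1-(1-\tfrac1T)^T)f(\mathcal{S}^\star)-\tfrac{C}{T}f(\mathcal{S}^\star)\geq(1-\tfrac1{\mathrm e})(1-O(\tfrac{C}{T}))f(\mathcal{S}^\star)$, matching the claimed constant $2N^2d(\mathcal{G})+\tfrac12N^2+N$ after bookkeeping (the $\tfrac12N^2$ from the second-order curvature term, the extra $N$ from a lower-order slack). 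Finally, by Lemma~\ref{lem::maxVect} the agents' own components $\vect{x}_{ii}(T)$ assemble into $\bar{\vect{x}}(T)\in P(\mathcal{M})$, and the independent per-agent sampling in the last loop realizes a random set $\bar{\mathcal{P}}$ with $\mathbb{E}[f(\bar{\mathcal{P}})]=F(\bar{\vect{x}}(T))$ exactly (this is the $P(\mathcal{M})$-vertex / partition-matroid case where rounding is lossless in expectation, replacing Pipage rounding of the central algorithm), which closes the bound. \textbf{Main obstacle.} The delicate part is step (ii)--(iii) of the chaining: I must bound $\sum_{p\in\mathcal{P}_i}$ of the gradient-estimation and belief-mismatch errors \emph{simultaneously across all $i$, all coordinates, and all $T$ iterations} while keeping the total at $O(N^2 d(\mathcal{G})/T)$ rather than something worse, and I must verify that the per-step sampling accuracy $\delta$ needed is exactly what makes $K_i$ appear as $\mathrm{e}^{-K_j/(8T^2)}$ in the success probability — i.e., that $\delta$ scales like $\tfrac{1}{T}$ (not $\tfrac1{T^2}$), since only then does the $O(1/T)$ optimality loss survive; reconciling these two requirements is the crux.
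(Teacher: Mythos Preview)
Your overall plan matches the paper's proof almost step for step: the second-order Taylor lower bound via the Hessian bound $|\partial^2 F/\partial x_p\partial x_q|\le f(\mathcal{S}^\star)$, the three-way chaining (optimality of $\widetilde{\vect v}_i$, sampling error via Chernoff--Hoeffding, belief mismatch via Proposition~\ref{thm::convergance}), the appeal to $\vect{1}_{\mathcal{S}^\star}.\nabla F(\bar{\vect{x}})\ge f(\mathcal{S}^\star)-F(\bar{\vect{x}})$, and the recursion unrolling are all exactly what the paper does in Appendix~A.

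There is, however, one genuine gap at the very end. You claim that the per-agent sampling in the last loop gives $\mathbb{E}[f(\bar{\mathcal{P}})]=F(\bar{\vect{x}}(T))$ \emph{exactly}. This is false. The multilinear extension satisfies $F(\bar{\vect{x}}(T))=\mathbb{E}[f(\mathcal{R}_{\bar{\vect{x}}(T)})]$ where each coordinate $p$ is included \emph{independently} with probability $\bar{x}_p(T)$; in particular, within a block $\mathcal{P}_i$ the random set $\mathcal{R}_{\bar{\vect{x}}(T)}$ can contain zero, one, or several elements. The algorithm's output $\bar{\mathcal{P}}$, by contrast, contains \emph{exactly one} element of each $\mathcal{P}_i$, drawn according to the same marginals. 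These two distributions are different (take two elements with probability $1/2$ each), so the expectations do not coincide.

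What the paper actually proves is the inequality $F(\bar{\vect{x}}(T))\le \mathbb{E}[f(\bar{\mathcal{P}})]$, and this requires a separate submodularity argument (Lemma~\ref{lm::expectedInEq}): writing $f(\mathcal{R}_{\vect{x}_{ii}}\cup\mathcal{S})$ as a telescoping sum of marginal gains and using diminishing returns to bound each gain by $\Delta_f(r_l\mid\mathcal{S})$, one shows that independent sampling within a block is dominated in expectation by single-element sampling with the same marginals. This lemma is applied block by block, conditioning on the remaining blocks. Without this step your argument does not connect $F(\bar{\vect{x}}(T))$ to $\mathbb{E}[f(\bar{\mathcal{P}})]$, so the final bound is not established.
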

By simplifying the probability statement and dropping the higher order terms, the optimality gap grantee of Theorem~\ref{thm::main} holds with the probability of at least $1-2T\,n\, \textup{e}^{-\frac{1}{8T^2}\underline{K}}, \,\, \underline{K}=\min\{K_1,\cdots,K_N\}$; note that $1-2T\,n\, \textup{e}^{-\frac{1}{8T^2}\underline{K}}\leq \left(\prod_{i\in\mathcal{A}} (1-2\textup{e}^{-\frac{1}{8T^2}K_j})^{|\mathcal{P}_i|}\right)^T$. Then, it is clear that the probability improves as $T$ and the $\underline{K}$, the number of the samples collected by agents, increase.  
\begin{rem}[Extra communication for improved optimality gap]
{\it
Replacing the update step~\eqref{eq::updateRule2} with $\vect{x}_i(t+1)=\vect{y}_i(d(\mathcal{G}))$ where  $\vect{y}_i(0)=\vect{x}_i^-(t+1)$ and 
\begin{align*}
    \vect{y}_i(m) = \underset{j \in \mathcal{N}_i \cup \{i\}}{\textup{max}} \vect{y}_j(m-1), \quad m\in\{1,\cdots, d(\mathcal{G})\},
\end{align*}
i.e., starting with $\vect{x}_i^-(t+1)$ and recursively repeating the update step~\eqref{eq::updateRule2} using the output of the previous recursion for $d(\mathcal{G})$ times, 
each agent $i\in\mathcal{A}$ arrives at $\vect{x}_i(t+1)=\bar{\vect{x}}(t+1)$ (recall Lemma~\ref{lem::maxVect}). 
Hence, for this revised implementation, following the proof of Theorem~\ref{thm::main}, we observe that~\eqref{eq::mainthm3} is replaced by 
$\left|\frac{\partial F}{\partial x_p}(\bar{\vect{x}}(t)) - \frac{\partial F}{\partial x_p}({\vect{x}}_i(t))\right|=0$, which consequently, leads to 
\begin{align}\label{eq::imp_opt_bound}
   \left (1\!-\!\frac{1}{\textup{e}}\right)\left(1\!-\!\left(\!\frac{1}{2}N^2+N\right)\frac{1}{T}\right)f(\mathcal{S}^\star)\!\leq\! \mathbb{E}[f(\bar{\mathcal{P}})],
\end{align}
with the probability of $\left(\prod_{i\in\mathcal{A}} (1-2\textup{e}^{-\frac{1}{8T^2}K_j})^{|\mathcal{P}_i|}\right)^T$. This improved optimality gap is achieved by $(d(\mathcal{G})-1)T$ extra communication per agent. The optimality bound~\eqref{eq::imp_opt_bound} is the same bound that is achieved by the centralized algorithm of~\cite{JV:08}. To implement this revision, Algorithm~\ref{alg:the_alg_decen_2}'s step 11 (equivalent to~\eqref{eq::F_prop}) should be replaced by $ \mathcal{F}_i=\mathcal{H}_i(d(\mathcal{G}))$, where $\mathcal{H}_i(0)= \mathcal{F}_i^{-}$, and 
\begin{align}\label{eq::F_update}
    \mathcal{H}_i(m)=\underset{j \in \mathcal{N}_i \cup \{i\}}{\textup{MAX}}\mathcal{H}_j^-(m-1), \quad m\in\{1,\cdots,d(\mathcal{G})\}.
\end{align}
}
\end{rem}

\begin{figure}[t]
\centering
\subfloat[Case 1]{
    {\includegraphics[width=.20\textwidth]{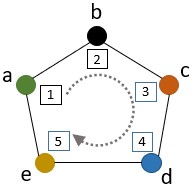}}}~
\subfloat[Case 2]{
    {\includegraphics[width=.20\textwidth]{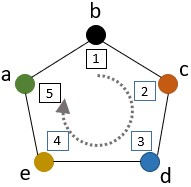}}}~
\subfloat[Case 3]{
    {\includegraphics[width=.20\textwidth]{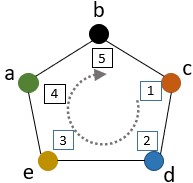}}}\\ 
\subfloat[Case 4]{
    {\includegraphics[width=.20\textwidth]{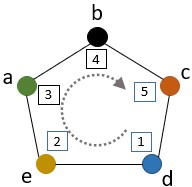}}}~
\subfloat[Case 5]{
    {\includegraphics[width=.20\textwidth]{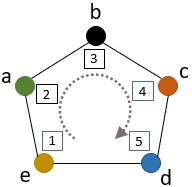}}}~
    \subfloat[Case 6]{
    {\includegraphics[width=.20\textwidth]{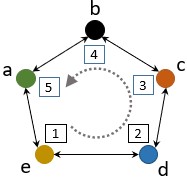}}}
    \\ 
\subfloat[]{
    {\includegraphics[width=.35\textwidth]{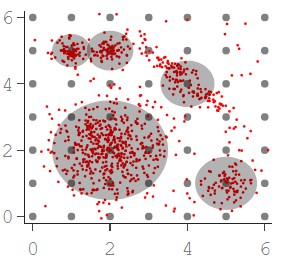}}}~
\subfloat[]{
    {\includegraphics[width=.35\textwidth]{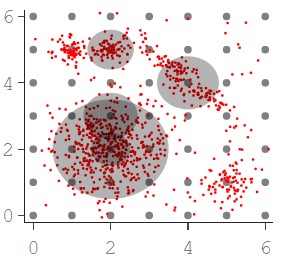}}}
    \caption{{\small Plots (a)-(f) show 6 different $\mathtt{SEQ}$s used in the sequential greedy algorithm. Plot (g) shows the outcome of using Algorithm~\ref{alg:the_alg_decen_2} whereas plot (h) shows the outcome of the sequential greedy algorithm when $\mathtt{SEQ}$ in Case 1 (plot (a)) is used. }}%
    \label{fig:ex1}%
\end{figure}
\section{Numerical Example}\label{sec::numerical}
Consider the multi-agent sensor placement problem introduced in Fig.~\ref{fig:city_partition} for $5$ agents for which $\mathcal{B}_i=\mathcal{B}$, i.e., each agent is able to move to any of the sensor placement points. This sensor placement problem is cast by optimization problem~\eqref{eq::mainProblem}. The field is a 6 unit by 6 unit square and the feasible sensor locations are the 6 by 6 grid in the center square of the field, see Fig.~\ref{fig:ex1}. The points of interest are spread around the map (small red dots in Fig.~\ref{fig:ex1}) in the total number of $900$. The sensing zone of the agents $\mathcal{A}=\{a,b,c,d,e\}$ are circles with radii of respectively $\{0.5,0.6,0.7,0.8,1.5\}$. The agents communicate over a ring graph as shown in Fig.~\ref{fig:ex1}. We first solve this problem using our proposed distributed Algorithm~\ref{alg:the_alg_decen_2}. The results of the simulation for different iteration and sampling numbers are shown in Table~\ref{table:2}. The algorithm produces good results at a modest number of iteration and sampling numbers (e.g. see $T=20$ and $K=500$). Fig.~\ref{fig:ex1}(g) shows the result of the deployment using Algorithm~\ref{alg:the_alg_decen_2}. 
Next, we solve this algorithm using the sequential greedy algorithm~\cite{NR-SSK:19} in a decentralized way by first choosing a route $\mathtt{SEQ}={\scriptsize \fbox{1}\to\fbox{2}\to\fbox{3}\to\fbox{4}\to\fbox{5}}$ that visits all the agents, and then giving $\mathtt{SEQ}$ to the agents so they follow $\mathtt{SEQ}$ to share their information in a sequential manner. Figure~\ref{fig:ex1}(a)-(f) gives 6 possible $\mathtt{SEQ}$ denoted by the semi-circular arrow inside the networks. The results of running the sequential greedy algorithm over the sequences in Fig.~\ref{fig:ex1}(a)-(f) is shown in Table~\ref{table:1}. 
\begin{table}
\begin{center}
\small
\begin{tabular}{ |c|c|c|c|c|c|c|c| } 
\hline 
\diagbox[]{ $T$ }{K} & 10000 & 500 & 100 & 50 & 10 & 5 & 1\\
\hline 
100 & 768 & 768 & 718 & 710 & 718 & 716 & 696 \\\hline 
20 & 768 & 768 & 718 & 710 & 726 & 716 & 696 \\\hline 
10 & 661 & 640 & 657 & 640 & 634 & 602 & 551 \\ \hline 
5 & 630  & 630 & 634 & 626 & 583 & 608 & 540 \\ \hline 
1 & 456  & 456 & 456 & 456 & 456 & 456 & 456\\ \hline 
\end{tabular}
\end{center}
\caption{\small The outcome of Algorithm~\ref{alg:the_alg_decen_2} for different iteration and sampling numbers. }
\label{table:2}
\end{table}
\begin{table}[h]
\small
\begin{center}
\begin{tabular}{ |c||c|c|c|c|c|c| } 
\hline 
 Case & 1 & 2 & 3 & 4 & 5 & 6\\ 
 \hline
 Utility & 634 & 704 & 699 & 640 & 767 & 760\\
 \hline 
\end{tabular}
\end{center}
\caption{\small Outcome of sequential greedy algorithm.}
\label{table:1}
\end{table}
What stands out about the sequential greedy algorithm is that the choice of sequence can affect the outcome of the algorithm significantly. 
 We can attribute this inconsistency to the heterogeneity of the sensors' measurement zone. We can see that when sensor $e$ is given the last priority to make its choice, the sequential greedy algorithm acts poorly. This can be explained by agents with smaller sensing zone picking high-density areas but not being able to cover it fully, see Fig.~\ref{fig:ex2}(h) which depicts the outcome of a sequential greedy algorithm using the sequence in Case 1. A simpler example justifying this observation is shown in Fig.~\ref{fig:ex2} with the two disjoint clusters of points and two sensors. One may suggest to sequence the agents from high to low sensing zone order, however this is not necessarily the best choice as we can see in Table~\ref{table:1}; the utility of case 6 is less than case 5 (the conjecture of sequencing the agents from strongest to weakest is not valid). Moreover, this ordering may lead to a very long $\mathtt{SEQ}$ over the communication graph. Interestingly, this inconsistency does not appear in solutions of Algorithm~\ref{alg:the_alg_decen_2} where the agents intrinsically are overcoming the importance of a sequence by deciding the position of the sensors over a time horizon of communication and exchanging their information~set.

\begin{figure}[t]
\centering
{
    {\includegraphics[width=.75\textwidth]{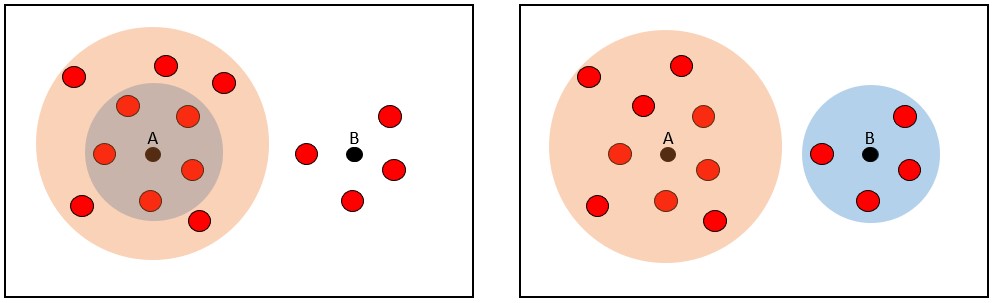}}}~
    \caption{{\small or The sequential greedy algorithm, when the blue agent chooses first assigns both the blue and the orange agents to point A resulting in inferior performance compared to the case that the orange agent chooses first. In the later case, orange agent gets A and the blue agent gets B, which is indeed the optimal solution.}}%
    \label{fig:ex2}%
\end{figure}

\section{Conclusion}
We proposed a distributed suboptimal algorithm to solve the problem of maximizing an monotone increasing submodular set function subject to a partitioned matroid constraint. Our problem of interest was motivated by optimal multi-agent sensor placement problems in discrete space. Our algorithm was a practical decentralization of a multilinear extension based algorithm that achieves $(1-1/\textup{e}-O(1/T))$ optimally gap, which is an improvement over $1/2$ optimality gap that the well-known sequential greedy algorithm achieves. In our numerical example, we compared the outcome obtained by our proposed algorithm with that of a decentralized sequential greedy algorithm which is constructed from assigning a priority sequence to the agents. We showed that the outcome of the sequential greedy algorithm is inconsistent and depends on the sequence. However, our algorithm's outcome due to its iterative nature intrinsically tended to be consistent, which in some ways also explains its better optimally gap over the sequential greedy algorithm. Our future work is to study the robustness of our proposed algorithm to message dropout. 
\subsubsection*{Acknowledgments}
This work is supported by NSF award IIS-SAS-1724331.
\bibliographystyle{ieeetr}%

\section*{Appendix A: Proof of the Results in Section~\ref{sec::Main}}
\begin{proof}[Proof of Lemma \ref{lem::maxVect}]
Since $f$ is a monotone increasing and submodular set function, we have $f(\mathcal{R}_{\vect{x}_i(t)} \cup \{p\})-f(\mathcal{R}_{\vect{x}_i(t)} \setminus \{p\})\geq0$ and hence $\widetilde{\nabla F}(\vect{x}_i(t))$ has positive entries  $\forall i \in \mathcal{A}$. This results in the optimization \eqref{eq::updateVect} subject to vector space \eqref{eq::updateVectSpace} to output vector $\widetilde{\vect{v}}_i(t) \in P_i(\mathcal{M})$ that has entries greater or equal to zero. Hence, according to the propagation and update rule \eqref{eq::updateRule1} and \eqref{eq::updateRule2}, we can conclude that $\vect{x}_{ii}(t)$ has increasing elements and only agent $i$ can update it and other agents only copy this value as $\hat{\vect{x}}_{ji}(t)$. Therefor, we can conclude that $\hat{x}_{jip}(t)\leq x_{iip}(t)$ for all $p \in \mathcal{P}_i$ which concludes the proof. 
\end{proof}

\begin{proof}[Proof of Proposition \ref{thm::convergance}]
$f$ is a monotone increasing and submodular set function therefor $f(\mathcal{R}_{\vect{x}_i(t)} \cup \{p\})-f(\mathcal{R}_{\vect{x}_i(t)} \setminus \{p\})\geq0$ and hence $\widetilde{\nabla} F(\vect{x}_i(t))$ has positive entries  $\forall i \in \mathcal{A}$. Then, because $\widetilde{\vect{v}}_j(t) \in P_j(\mathcal{M})$, it follows from~\eqref{eq::updateVect} that $\widetilde{\vect{v}}_j(t)$ has non-negative entries, $\widetilde{{v}}_{jp}(t)\geq0$, which satisfy $\SUM{ p \in \mathcal{P}_j }{}\widetilde{{v}}_{jp}(t)=1$. Therefore, it follows from~\eqref{eq::updateRule1} and Lemma~\ref{lem::maxVect} that 
\begin{align}\label{eq::oneStepMax}
    \vect{1}.\vect{x}_{jj}(t+1)=\vect{1}.\vect{x}_{jj}(t)+\frac{1}{T},\quad j\in\mathcal{A}.
\end{align}
Using~\eqref{eq::oneStepMax}, we can also write
\begin{align}\label{eq::backwardAdd}
    \vect{1}.\vect{x}_{jj}(t)=\vect{1}.\vect{x}_{jj}(t-d(\mathcal{G}))+\frac{1}{T}d(\mathcal{G}),\quad j\in\mathcal{A}.
\end{align}
Furthermore, it follows from Lemma~\eqref{lem::maxVect} that 
for all $\forall p \in \mathcal{P}_j$ and any $i\in\mathcal{A}\backslash\{j\}$ we can write
\begin{align}\label{eq::elementInEq}
   x_{jjp}(t) \geq\hat{x}_{ijp}(t).
\end{align}
Also, since every agent $i\in\mathcal{A}\backslash\{j\}$ can be reached from agent $j\in\mathcal{A}$ at most in $d(\mathcal{G})$ hops, it follows from the propagation and update laws~\eqref{eq::updateRule1} and~\eqref{eq::updateRule2}, for all $\forall p \in \mathcal{P}_j$, for any $i\in\mathcal{A}\backslash\{j\}$ that
\begin{align}\label{eq::elementInEq2}
\hat{x}_{ijp}(t) \geq x_{jjp}(t-d(\mathcal{G})).
\end{align}
Thus, for any $j\in\mathcal{A}$ and $i\in\mathcal{A}\backslash\{j\}$,~\eqref{eq::elementInEq} and~\eqref{eq::elementInEq2} result in
\begin{align}\label{eq::backwardIn}
    \vect{1}.\vect{x}_{jj}(t) \geq \vect{1}.\hat{\vect{x}}_{ij}(t) \geq \vect{1}.\vect{x}_{jj}(t-d(\mathcal{G})).
\end{align}
Next, we can use~\eqref{eq::backwardAdd} and~\eqref{eq::backwardIn} to write
\begin{align}\label{eq::backwardInSep}
    \vect{1}.\vect{x}_{jj}(t) \geq \vect{1}.\hat{\vect{x}}_{ij}(t) \geq \vect{1}.\vect{x}_{jj}(t) - \frac{1}{T}d(\mathcal{G}),
\end{align}
for $j\in\mathcal{A}$ and $i\in\mathcal{A}\backslash\{j\}$. Using~\eqref{eq::backwardInSep} for any $i\in\mathcal{A}$ we can write 
\begin{align}\label{eq::backwardDouInEq}
    \SUM{j \in \mathcal{A}}{} \vect{1}.\vect{x}_{jj}(t) \geq
   \vect{x}_{ii}(t)+ \SUM{j \in \mathcal{A} \backslash \{i\}}{}  \vect{1}.\hat{\vect{x}}_{ij}(t) \geq
    \SUM{j \in \mathcal{A}}{} \vect{1}.\vect{x}_{jj}(t) - \frac{1}{T}Nd(\mathcal{G}).
\end{align}
Then, using Lemma~\ref{lem::maxVect}, from \eqref{eq::backwardDouInEq} we can write
\begin{align*}
    \vect{1}.\bar{\vect{x}}(t) \geq \vect{1} .\vect{x}_i(t) \geq  \vect{1}.\bar{\vect{x}}(t) - \frac{1}{T}Nd(\mathcal{G}),
\end{align*}
which ascertains~\eqref{eq::Pro51_a}. Next, note that from Lemma~\ref{lem::maxVect}, we have $\vect{x}_{jj}(t)=\vect{x}_{jj}^{-}(t)$ for any $j\in\mathcal{A}$. Then, using~\eqref{eq::updateRule1} and invoking Lemma~\ref{lem::maxVect}, we  obtain~\eqref{eq::Pro51_b},
which, given~\eqref{eq::oneStepMax}, also ascertains~\eqref{eq::Pro51_c}.
\end{proof}

\begin{proof}[Proof of Theorem \ref{thm::main}] Knowing that $\left|\frac{\partial^2F}{\partial x_p \partial x_q}\right| \leq f(\mathcal{S}^\star)$ from Lemma \ref{lm::secondDer} and~\eqref{eq::Pro51_c}, 
it follows from Lemma~\ref{lm::aux3} that
\begin{align*}
    F(\bar{\vect{x}}(t+1)) - F(\bar{\vect{x}}(t)) \geq
   \nabla F(\bar{\vect{x}}(t)).(\bar{\vect{x}}(t+1)-\bar{\vect{x}}(t))-\frac{1}{2}N^2\frac{1}{T^2}f(\mathcal{S}^\star),
\end{align*}
which, given~\eqref{eq::Pro51_b}, leads to
\begin{align}\label{eq::mainthm2}
  F(\bar{\vect{x}}(t+1)) - F(\bar{\vect{x}}(t)) \geq
   \frac{1}{T} \SUM{i \in \mathcal{A}}{} \widetilde{\vect{v}}_i(t).\nabla F(\bar{\vect{x}}(t)) -\frac{1}{2}N^2\frac{1}{T^2}f(\mathcal{S}^\star).
\end{align}
Next, we note that by definition, $\bar{\vect{x}}(t)\geq \vect{x}_i(t)$ for any  $\forall i \in \mathcal{A}$. Therefore, given~\eqref{eq::Pro51_a}, by invoking 
Lemma~\ref{lm::aux3}, for any  $i \in \mathcal{A}$ we can write
\begin{align}\label{eq::mainthm3}
    &\left|\frac{\partial F}{\partial x_p}(\bar{\vect{x}}(t)) - \frac{\partial F}{\partial x_p}({\vect{x}}_i(t))\right|\leq  N\frac{1}{T}d(\mathcal{G}))f(\mathcal{S}^\star),
\end{align}
for $p\in \{1,\cdots,n\}$. Recall that at each time step $t$, the realization of $\widetilde{\vect{v}}_i(t)$ in~\eqref{eq::updateVect} that Algorithm~\ref{alg:the_alg_decen_2} uses is
\begin{align}\label{eq::t_v_i}
\widetilde{\vect{v}}_i(t)=\vect{1}_{p^\star}, \qquad p^\star\in\mathcal{P}_i~~ \text{is given by}~\eqref{eq::p_star} \end{align}
for every $i\in\mathcal{A}$. Thus, $\vect{1}.\widetilde{\vect{v}}_i(t)=1$, $i\in\mathcal{A}$. Consequently, using~\eqref{eq::mainthm3} we can write
\begin{align}\label{eq::inEquality1}
    \SUM{i \in \mathcal{A}}{} \widetilde{\vect{v}}_i(t). \nabla F(\bar{\vect{x}}(t)) \geq
    \SUM{i \in \mathcal{A}}{} \widetilde{\vect{v}}_i(t). \nabla F(\vect{x}_i(t)) - N^2\frac{1}{T}d(\mathcal{G}))f(\mathcal{S}^\star).
\end{align}
Next, we let 
$$\bar{\vect{v}}_i(t) = \underset{\vect{v} \in P_i(\mathcal{M})}{\textup{argmax}} \vect{v}.\nabla F(\bar{\vect{x}}(t))$$ 
and 
$$\hat{\vect{v}}_i(t) = \underset{\vect{v} \in P_i(\mathcal{M})}{\textup{argmax}} \vect{v}.\nabla F(\vect{x}_i(t)).$$ 
Because $f$ is monotone increasing, by virtue of Lemma~\ref{lem::prop_F}, $\frac{\partial F}{\partial x_p}\geq0$, and as such $\bar{\vect{v}}_i(t)=\vect{1}_{\bar{p}}$ and $\hat{\vect{v}}_i(t)=\vect{1}_{\hat{p}}$, where $\bar{p}=\underset{p\in\mathcal{P}_i}{\arg\max} \frac{\partial F (\bar{\vect{x}}(t))}{\partial x_p}$ and $\hat{p}=\underset{p\in\mathcal{P}_i}{\arg\max} \frac{\partial F (\vect{x}_i(t))}{\partial x_p}$.
Therefore, using $$\hat{\vect{v}}_i(t). \nabla F(\vect{x}_i(t)) \geq \bar{\vect{v}}_i(t). \nabla F(\vect{x}_i(t))$$ 
and $$\hat{\vect{v}}_i(t). \nabla F(\vect{x}_i(t)) \geq \tilde{\vect{v}}_i(t). \nabla F(\vect{x}_i(t)),$$
$i\in\mathcal{A}$, and ~\eqref{eq::mainthm3} we can also write 
\begin{subequations}\label{eq::determin_inEquality}
\begin{align}
    &\SUM{i \in \mathcal{A}}{} \hat{\vect{v}}_i(t). \nabla F(\vect{x}_i(t)) \geq \SUM{i \in \mathcal{A}}{} \bar{\vect{v}}_i(t). \nabla F(\vect{x}_i(t))\geq 
    \SUM{i \in \mathcal{A}}{} \bar{\vect{v}}_i(t). \nabla F(\bar{\vect{x}}(t)) - N^2\frac{1}{T}d(\mathcal{G})f(\mathcal{S}^\star),\label{eq::inEquality4}\\
    &\SUM{i \in \mathcal{A}}{} \hat{\vect{v}}_i(t). \nabla F(\vect{x}_i(t)) \geq \SUM{i \in \mathcal{A}}{} \tilde{\vect{v}}_i(t). \nabla F(\vect{x}_i(t)).\label{eq::determin_inEquality_b}
\end{align}
\end{subequations}

On the other hand, by virtue of Lemma~\ref{lem::gradient_estimate}, $\frac{\widetilde{\partial F}}{\partial x_p}(\vect{x}_j(t))$, $p \in \mathcal{P}_j$ that each agent $j\in\mathcal{A}$ uses to solve optimization problem~\eqref{eq::p_star} (equivalently \eqref{eq::updateVect}) satisfies \begin{align}\label{eq::F_tilde_F}
    \left|\frac{\widetilde{\partial F}}{\partial x_p}(\vect{x}_j(t)) - \frac{{\partial F}}{\partial x_p}(\vect{x}_j(t))\right| \leq \frac{1}{2T}f(\mathcal{S}^\star)
\end{align} with the probability of  $1-2\textup{e}^{-\frac{1}{8T^2}K_j}$. Using~\eqref{eq::determin_inEquality_b} and~\eqref{eq::F_tilde_F}, and also that the samples are drawn independently 
\begin{subequations}
\begin{align}\label{eq::inEquality2}
    &\SUM{i \in \mathcal{A}}{} \widetilde{\vect{v}}_i(t). \nabla F(\vect{x}_i(t)) \geq
    \SUM{i \in \mathcal{A}}{} \widetilde{\vect{v}}_i(t). \widetilde{\nabla F}(\vect{x}_i(t)) - N\frac{1}{2T}f(\mathcal{S}^\star),\\
    &\SUM{i \in \mathcal{A}}{} \widetilde{\vect{v}}_i(t). \widetilde{\nabla F}(\vect{x}_i(t)) \geq
    \SUM{i \in \mathcal{A}}{} \hat{\vect{v}}_i(t). \widetilde{\nabla F}(\vect{x}_i(t))\geq 
    \qquad\SUM{i \in \mathcal{A}}{} \hat{\vect{v}}_i(t). \nabla F(\vect{x}_i(t)) - N\frac{1}{2T}f(\mathcal{S}^\star),\label{eq::inEquality3}
\end{align}    
\end{subequations}
with the probability of $\prod_{i\in\mathcal{A}} (1-2\textup{e}^{-\frac{1}{8T^2}K_j})^{|\mathcal{P}_i|}$.

From~\eqref{eq::inEquality1},~\eqref{eq::inEquality4},\eqref{eq::inEquality2}, and \eqref{eq::inEquality3} now we can write 
\begin{align}\label{eq::inEquality5}
    \SUM{i \in \mathcal{A}}{} \widetilde{\vect{v}}_i(t). \nabla F(\bar{\vect{x}}(t)) \geq \SUM{i \in \mathcal{A}}{} \bar{\vect{v}}_i(t). \nabla F(\bar{\vect{x}}(t)) - (2 Nd(\mathcal{G}))+1 )N\frac{1}{T}f(\mathcal{S}^\star),
\end{align}
with the probability of $1-2\,\sum_{i \in \mathcal{A}}^{}\textup{e}^{-\frac{1}{8T^2}K_i}$.

Next, let $\vect{v}_i^\star$ be the projection of $\vect{1}_{\mathcal{S}^\star}$ into $P_i(\mathcal{M})$. Knowing that $P_i(\mathcal{M})$s are disjoint sub-spaces of $P(\mathcal{M})$ covering the whole space then we can write 
\begin{align}\label{eq::subspace}
    \vect{1}_{\mathcal{S}^\star} = \SUM{i \in \mathcal{A}}{} \vect{v}_i^\star.
\end{align}
Then, using \eqref{eq::inEquality5}, \eqref{eq::subspace}, and invoking Lemma \ref{lm::optimalInEq} and the fact that $\bar{\vect{v}}_i(t). \nabla F(\bar{\vect{x}}(t)) \geq \vect{v}^\star_i(t). \nabla F(\bar{\vect{x}}(t))$ we obtain
\begin{align}\label{eq::inEquality6}
    &\SUM{i \in \mathcal{A}}{} \widetilde{\vect{v}}_i(t). \nabla F(\bar{\vect{x}}(t)) \geq \SUM{i \in \mathcal{A}}{} \vect{v}^\star_i(t). \nabla F(\bar{\vect{x}}(t)) - (2 Nd(\mathcal{G}))+1 )N\frac{1}{T}f(\mathcal{S}^\star) =\nonumber \\    
    &\vect{1}_{\mathcal{S}^\star}.\nabla F(\bar{\vect{x}}(t)) - (2 Nd(\mathcal{G}))+1 )N\frac{1}{T}f(\mathcal{S}^\star) \geq f(\mathcal{S}^\star) - F(\bar{\vect{x}}(t))- (2 Nd(\mathcal{G}))+1)\frac{N}{T}f(\mathcal{S}^\star),
\end{align}
with the probability of $\prod_{i\in\mathcal{A}} (1-2\textup{e}^{-\frac{1}{8T^2}K_j})^{|\mathcal{P}_i|}$. Hence, using~\eqref{eq::mainthm2} and \eqref{eq::inEquality6}, we conclude that
\begin{align}\label{eq::mainthm4}
   F(\bar{\vect{x}}(t+1)) - F(\bar{\vect{x}}(t)) \geq  \frac{1}{T}(f(\mathcal{S}^\star) - F(\bar{\vect{x}}(t))-
   (2 Nd(\mathcal{G}))+\frac{1}{2}N+1)\frac{N}{T^2}f(\mathcal{S}^\star),
\end{align} 
with the probability of $\prod_{i\in\mathcal{A}} (1-2\textup{e}^{-\frac{1}{8T^2}K_j})^{|\mathcal{P}_i|}$.

Next, let $g(t)=f(\mathcal{S}^\star)-F(\bar{\vect{x}}(t))$ and $\beta =(2 Nd(\mathcal{G}))+\frac{1}{2}N+1)\frac{N}{T^2}f(\mathcal{S}^\star)$, to rewrite~\eqref{eq::mainthm4} as 
\begin{align}\label{eq::mainthm5}
   &(f(\mathcal{S}^\star)-F(\bar{\vect{x}}(t))) - (f(\mathcal{S}^\star)-F(\bar{\vect{x}}(t+1)))=\nonumber \\
   &g(t)-g(t+1) \geq
   \frac{1}{T}(f(\mathcal{S}^\star)-F(\bar{\vect{x}}(t))) - \beta  = \frac{1}{T}g(t)-\beta.
\end{align}\label{eq::mainthm6}
Then from inequality \eqref{eq::mainthm5} we get 
\begin{align}\label{eq::mainthm6}
   g(t+1) \leq (1-\frac{1}{T})g(t)+\beta
\end{align}
with the probability of $\prod_{i\in\mathcal{A}} (1-2\textup{e}^{-\frac{1}{8T^2}K_j})^{|\mathcal{P}_i|}$. Solving for inequality \eqref{eq::mainthm6} at time $T$ yields
\begin{align}\label{eq::mainthm7}
    g(T) \leq (1-\frac{1}{T})^T g(0) +\beta \SUM{k=0}{T-1} (1-\frac{1}{T})^k=(1-\frac{1}{T})^T g(0) + T\beta(1-(1-\frac{1}{T})^T)
\end{align}
with the probability of 
$\left(\prod_{i\in\mathcal{A}} (1-2\textup{e}^{-\frac{1}{8T^2}K_j})^{|\mathcal{P}_i|}\right)^T$.
Substituting back $g(T)=f(\mathcal{S}^\star)-F(\bar{\vect{x}}(T))$ and $g(0)=f(\mathcal{S}^\star)-F(\vect{x}(0))=f(\mathcal{S}^\star)$, in~\eqref{eq::mainthm7} we then obtain 
\begin{align}\label{eq::mainthm8}
    (1-(1-\frac{1}{T})^T)(f(\mathcal{S}^\star)-T\beta)=(1-(1&-\frac{1}{T})^T)(1-(2 Nd(\mathcal{G}))+\frac{1}{2}N+1)\frac{N}{T})f(\mathcal{S}^\star)\nonumber \\
    &\leq F(\bar{\vect{x}}(T)),
\end{align}
with the probability of $\left(\prod_{i\in\mathcal{A}} (1-2\textup{e}^{-\frac{1}{8T^2}K_j})^{|\mathcal{P}_i|}\right)^T$.
By applying $\frac{1}{\textup{e}}\geq(1-(1-\frac{1}{T})^T)$, we get 
\begin{align}
(1\!-\!\frac{1}{\textup{e}})(1\!-\!(2 Nd(\mathcal{G})\!+\!\frac{1}{2}N+1)\frac{N}{T})f(\mathcal{S}^\star) \!\leq F(\bar{\vect{x}}(T)),
\end{align}
with the probability of $\left(\prod_{i\in\mathcal{A}} (1-2\textup{e}^{-\frac{1}{8T^2}K_j})^{|\mathcal{P}_i|}\right)^T$.

Given~\eqref{eq::t_v_i}, from the propagation and update rules \eqref{eq::updateRule1} and \eqref{eq::updateRule2} and Lemma~\ref{lem::maxVect} we can conclude that $\vect{1}.\vect{x}_{ii}(T)=1$. Furthermore by defining $\mathcal{R}_{\vect{x}_{ii}(T)}$ to be a random set where each member if sampled according to $\vect{x}_{ii}(T)$ and from $\mathcal{P}_i$. Since $\vect{1}.\vect{x}_{ii}(T)=1$, we can also define $\mathcal{T}_{\vect{x}_{ii}(T)}$ to be a random set where only one policy is sampled from $\mathcal{P}_i$ according to $\vect{x}_{ii}(T)$, then using Lemma \ref{lm::expectedInEq}, we can write
\begin{align*}
    &F(\bar{\vect{x}}(T)) = \mathbb{E}[f(\mathcal{R}_{\vect{x}(T)})]=\mathbb{E}[f(\mathcal{R}_{\vect{x}_{11}(T)} \cup \cdots \cup \mathcal{R}_{\vect{x}_{NN}(T)})]\\
    &\leq \mathbb{E}[f(\mathcal{T}_{\vect{x}_{11}(T)} \cup \mathcal{R}_{\vect{x}_{22}(T)} \cup \cdots \cup \mathcal{R}_{\vect{x}_{NN}(T)})]\\
    &\leq \cdots \\
    &\leq \mathbb{E}[f(\mathcal{T}_{\vect{x}_{11}(T)} \cup \cdots \cup \mathcal{T}_{\vect{x}_{NN}(T)})]
\end{align*}
which concludes the proof.
\end{proof}

\section*{Appendix B: Auxiliary Results}
In what follows we let  $\mathcal{S}^\star$ to represent the optimizer of problem~\eqref{eq::mainProblem}.
\begin{lem}\label{lm::optimalInEq}
{\it
Consider the set value optimization problem~\eqref{eq::mainProblem}. Suppose $f:\mathcal{P} \to \realnonnegative$ is an increasing and submodular set function and consider its multilinear extension $F:\realnonnegative^n \to \realnonnegative$. Then $\forall \vect{x} \in P(\mathcal{M})$
\begin{align*}
    \vect{1}_{\mathcal{S}^\star}.\nabla F(\vect{x}) \geq f(\mathcal{S}^\star) - F(\vect{x}).
\end{align*}
}\end{lem}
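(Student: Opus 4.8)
The plan is to run the same geometric argument that Section~\ref{sec::Main} uses for the continuous greedy flow, but with the fixed feasible point $\vect{1}_{\mathcal{S}^\star}$ playing the role of the optimum. Fix $\vect{x}\in P(\mathcal{M})$; note that the partition-matroid polytope~\eqref{eq::convexhull} satisfies $P(\mathcal{M})\subset[0,1]^n$, so $\vect{x}\in[0,1]^n$. Define the nonnegative direction $\vect{v}^\star=\max\{\vect{1}_{\mathcal{S}^\star}-\vect{x},\vect{0}\}$, so that $\vect{0}\leq\vect{v}^\star\leq\vect{1}_{\mathcal{S}^\star}$ componentwise and $\vect{x}+\vect{v}^\star=\max\{\vect{x},\vect{1}_{\mathcal{S}^\star}\}\in[0,1]^n$. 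First I would invoke monotonicity of $F$ (first part of Lemma~\ref{lem::prop_F}): since $\vect{x}+\vect{v}^\star\geq\vect{1}_{\mathcal{S}^\star}$ entrywise, $F(\vect{x}+\vect{v}^\star)\geq F(\vect{1}_{\mathcal{S}^\star})=f(\mathcal{S}^\star)$, where the last equality holds because $F$ agrees with $f$ on the vertices of the hypercube.

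Next I would establish the first-order upper bound $F(\vect{x}+\vect{v}^\star)-F(\vect{x})\leq\vect{v}^\star.\nabla F(\vect{x})$ by a concavity argument along the segment. Let $g(\zeta)=F(\vect{x}+\zeta\vect{v}^\star)$ for $\zeta\in[0,1]$; since this segment stays in $[0,1]^n$, $g$ is well defined and twice differentiable there, with $g''(\zeta)=\sum_{p,q}v^\star_p v^\star_q\,\frac{\partial^2 F}{\partial x_p\partial x_q}(\vect{x}+\zeta\vect{v}^\star)$. Because $F$ is multilinear the pure second derivatives $\frac{\partial^2 F}{\partial x_p^2}$ vanish, and by submodularity (second part of Lemma~\ref{lem::prop_F}) every mixed second partial is $\leq 0$; combined with $\vect{v}^\star\geq\vect{0}$ this forces $g''\leq 0$, i.e.\ $g$ is concave, hence $g(1)-g(0)\leq g'(0)=\vect{v}^\star.\nabla F(\vect{x})$.

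Finally, monotonicity of $F$ also gives $\nabla F(\vect{x})\geq\vect{0}$, so with $\vect{0}\leq\vect{v}^\star\leq\vect{1}_{\mathcal{S}^\star}$ we chain $\vect{1}_{\mathcal{S}^\star}.\nabla F(\vect{x})\geq\vect{v}^\star.\nabla F(\vect{x})\geq F(\vect{x}+\vect{v}^\star)-F(\vect{x})\geq f(\mathcal{S}^\star)-F(\vect{x})$, which is exactly the claim. The main obstacle is the concavity step: it relies essentially on both features of the multilinear extension at once — multilinearity, to kill the pure second derivatives (which carry no definite sign), and the nonpositivity of the mixed second partials from submodularity — together with the fact that $\vect{v}^\star$ was deliberately chosen nonnegative so that the weights $v^\star_p v^\star_q$ do not flip signs; one should also verify that the entire segment $\vect{x}+\zeta\vect{v}^\star$, $\zeta\in[0,1]$, remains in $[0,1]^n$ so that Lemma~\ref{lem::prop_F} applies along it, which follows since $\vect{x}+\vect{v}^\star=\max\{\vect{x},\vect{1}_{\mathcal{S}^\star}\}\in[0,1]^n$ and the box is convex.
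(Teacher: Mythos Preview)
Your proof is correct and follows exactly the standard Vondr\'ak argument that the paper cites in its proof (``see~\cite{JV:08}'') and already sketches informally in Section~\ref{sec::Main} when reviewing the continuous greedy process. There is nothing to add.
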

\begin{proof}
see~\cite{JV:08} for the proof.
\end{proof}

\begin{lem}\label{lm::secondDer}
\it
Consider the set value optimization problem~\eqref{eq::mainProblem}. Suppose $f:\mathcal{P} \to \realnonnegative$ is an increasing and submodular set function and consider its multilinear extension $F:\realnonnegative^n \to \realnonnegative$. Then  
\begin{align*}
\left|\frac{\partial^2F}{\partial x_p \partial x_q}\right| \leq f(\mathcal{S}^\star),\qquad p,q\in\mathcal{P}.
\end{align*}
\end{lem}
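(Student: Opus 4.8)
The plan is to bound the second partial derivative $\frac{\partial^2 F}{\partial x_p \partial x_q}$ using its stochastic representation~\eqref{eq::secondDer}, which writes the derivative as the expectation of a signed sum of four evaluations of $f$. First I would treat the two cases $p=q$ and $p\neq q$ separately; when $p=q$ the mixed second derivative is zero (since $F$ is multilinear, each variable appears to at most the first power), so the bound is trivial and only the case $p\neq q$ requires work. For $p\neq q$, I would start from
\begin{align*}
\frac{\partial^2 F}{\partial x_p \partial x_q}(\vect{x}) &= \mathbb{E}\big[f(\mathcal{R}_{\vect{x}} \cup \{p,q\})-f(\mathcal{R}_{\vect{x}} \cup \{q\} \setminus \{p\})\\
&\quad -f(\mathcal{R}_{\vect{x}} \cup \{p\} \setminus \{q\})+f(\mathcal{R}_{\vect{x}} \setminus \{p,q\})\big],
\end{align*}
and rewrite the integrand as a difference of two discrete derivatives: letting $\mathcal{S}=\mathcal{R}_{\vect{x}}\setminus\{p,q\}$, the bracket equals $\Delta_f(p\,|\,\mathcal{S}\cup\{q\}) - \Delta_f(p\,|\,\mathcal{S})$.

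The key step is then to use the two structural properties of $f$ in tandem. By submodularity~\eqref{eq::submodularity}, $\Delta_f(p\,|\,\mathcal{S}\cup\{q\}) \le \Delta_f(p\,|\,\mathcal{S})$, so the integrand is $\le 0$; hence $\big|\Delta_f(p\,|\,\mathcal{S}\cup\{q\}) - \Delta_f(p\,|\,\mathcal{S})\big| = \Delta_f(p\,|\,\mathcal{S}) - \Delta_f(p\,|\,\mathcal{S}\cup\{q\})$. By monotonicity, each marginal gain $\Delta_f(p\,|\,\cdot)$ is nonnegative, so this difference is at most $\Delta_f(p\,|\,\mathcal{S}) = f(\mathcal{S}\cup\{p\}) - f(\mathcal{S}) \le f(\mathcal{S}\cup\{p\})$. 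It remains to bound $f(\mathcal{S}\cup\{p\})$ by $f(\mathcal{S}^\star)$. This does not follow from monotonicity alone (the set $\mathcal{S}\cup\{p\}$ need not be feasible), so I would instead argue that the single-element set $\{p\}$ satisfies $\{p\}\in\mathcal{I}$ for the partition matroid (each block contributes at most one element), giving $f(\{p\}) \le f(\mathcal{S}^\star)$ by optimality; more generally one can take $\mathcal{S}^\star$ to be a global maximizer of $f$ over all of $2^{\mathcal{P}}$, and then $f(\mathcal{A}) \le f(\mathcal{S}^\star)$ for every $\mathcal{A}\subseteq\mathcal{P}$ automatically — this is the reading of $\mathcal{S}^\star$ that makes the bound clean, and I would state it that way.

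Finally, since the pointwise bound $\big|\Delta_f(p\,|\,\mathcal{S}\cup\{q\}) - \Delta_f(p\,|\,\mathcal{S})\big| \le f(\mathcal{S}^\star)$ holds for every realization of the random set $\mathcal{S}=\mathcal{R}_{\vect{x}}\setminus\{p,q\}$, taking expectations and applying the triangle inequality $\big|\mathbb{E}[Z]\big| \le \mathbb{E}\big[|Z|\big]$ yields $\big|\frac{\partial^2 F}{\partial x_p \partial x_q}(\vect{x})\big| \le f(\mathcal{S}^\star)$, everywhere on $[0,1]^n$, which is the claim. The main obstacle I anticipate is purely a matter of bookkeeping on what $\mathcal{S}^\star$ denotes: if it is only the maximizer over the matroid-feasible sets $\mathcal{I}$ rather than over all subsets, one must insert the extra observation that singletons are feasible (and that the two "loss" terms in the bracket can each be controlled by a singleton value after using monotonicity to discard $\mathcal{S}$), whereas reading $\mathcal{S}^\star$ as an unconstrained maximizer removes that wrinkle entirely. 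Everything else is a direct application of submodularity plus monotonicity plus Jensen's/triangle inequality, with no delicate estimates.
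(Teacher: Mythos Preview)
Your proposal is correct and follows essentially the same route as the paper: rewrite the integrand in~\eqref{eq::secondDer} as a difference of marginal gains $\Delta_f(p\,|\,\mathcal{S}\cup\{q\})-\Delta_f(p\,|\,\mathcal{S})$, bound each gain in $[0,f(\{p\})]$, and finish with $f(\{p\})\le f(\mathcal{S}^\star)$ since singletons lie in $\mathcal{I}$. The only unnecessary detour is your intermediate bound $\Delta_f(p\,|\,\mathcal{S})\le f(\mathcal{S}\cup\{p\})$; the paper instead applies submodularity directly, $\Delta_f(p\,|\,\mathcal{S})\le \Delta_f(p\,|\,\emptyset)=f(\{p\})$ (note this step is submodularity, not monotonicity as you label it), which avoids the bookkeeping worry about whether $\mathcal{S}\cup\{p\}$ is feasible.
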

\begin{proof}
Since $p \not \in \mathcal{R}_{\vect{x}}  \cup \{q\} \setminus \{p\}$, therefor by submodularity of $f$ we can write
\begin{align}\label{eq::partial1}
    &0\leq \Delta_f(p|\mathcal{R}_{\vect{x}}  \cup \{q\} \setminus \{p\})\leq f(\{p\}),\nonumber \\
    &0\leq \Delta_f(p|\mathcal{R}_{\vect{x}} \!\! \setminus \!\! \{p,q\})\leq f(\{p\}).
\end{align}
Knowing that
\begin{align*}
    \Delta_f(p|\mathcal{R}_{\vect{x}} \! \cup \! \{q\} \! \setminus \! \{p\})\!=\!f(\mathcal{R}_{\vect{x}} \cup \{p,q\})\! -\!f(\mathcal{R}_{\vect{x}}  \cup \{q\} \!\! \setminus \!\! \{p\}),
\end{align*}
and
\begin{align*}
\Delta_f(p|\mathcal{R}_{\vect{x}} \!\! \setminus \!\! \{p,q\})=f(\mathcal{R}_{\vect{x}} \cup \{p\}) \!\! \setminus \!\! \{q\}) -f(\mathcal{R}_{\vect{x}} \!\! \setminus \!\! \{p,j\}),
\end{align*}
then definition \eqref{eq::secondDer} can be written as 
\begin{align}\label{eq::partial3}
\frac{\partial F}{\partial x_p \partial x_q} = \mathbb{E}[\Delta_f(p|\mathcal{R}_{\vect{x}}  \cup \{q\} \!\! \setminus \!\! \{p\}) \!\!- \!\! \Delta_f(p|\mathcal{R}_{\vect{x}} \!\! \setminus \!\! \{p,q\})].
\end{align}
Putting \eqref{eq::partial1} and \eqref{eq::partial3} together results in
\begin{align*}
    \left|\frac{\partial^2F}{\partial x_p \partial x_q}\right| \leq f(\{p\}).
\end{align*}
where knowing that $f(\{p\})\leq f(\mathcal{S}^\star)$ concludes the proof.
\end{proof}

\begin{lem}\label{lm::aux3}
Consider a twice differentiable function $F(\vect{x}):\mathbb{R}^N \to \mathbb{R}$ which satisfies $\left|\frac{\partial^2F}{\partial x_p \partial x_q}\right| \leq \alpha$ for any $p,q\in\{1,\cdots,N\}$.  Then for any $\vect{x}_1,\vect{x}_2 \in \mathbb{R}^N$ satisfying $\vect{x}_2 \geq \vect{x}_1$ and $\vect{1}.({\vect{x}}_2 - {\vect{x}}_1) \leq \beta$ the following holds,
\begin{subequations}\label{eq::mainthm1}
\begin{align}
   & \left|\frac{\partial F}{\partial x_p}({\vect{x}}_1+\epsilon({\vect{x}}_2-{\vect{x}}_1)) -  \frac{\partial F}{\partial x_p}({\vect{x}}_1)\right| \leq \epsilon \alpha \beta,\\
   & F({\vect{x}}_2) - F({\vect{x}}_1) \geq 
   \nabla F({\vect{x}}_1).({\vect{x}}_2-{\vect{x}}_1)-\frac{1}{2}\alpha \beta^2,
\end{align}     
\end{subequations}
for $\epsilon \in [0 \,\, 1]$.
\end{lem}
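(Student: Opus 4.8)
The plan is to prove the two inequalities of Lemma~\ref{lm::aux3} using a one-dimensional reduction along the segment from $\vect{x}_1$ to $\vect{x}_2$. Define $\phi(\epsilon) = F(\vect{x}_1 + \epsilon(\vect{x}_2 - \vect{x}_1))$ for $\epsilon \in [0,1]$, and let $\vect{d} = \vect{x}_2 - \vect{x}_1$, which by hypothesis satisfies $\vect{d} \geq \vect{0}$ and $\vect{1}.\vect{d} \leq \beta$; hence $\sum_{q} d_q \leq \beta$ with every $d_q \geq 0$. The first inequality is essentially a statement about how much the partial derivative $\frac{\partial F}{\partial x_p}$ can change along the segment, and the second is a second-order Taylor estimate of $F$ along the segment.

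For the first inequality, I would write, for fixed $p$,
\begin{align*}
\frac{\partial F}{\partial x_p}(\vect{x}_1 + \epsilon \vect{d}) - \frac{\partial F}{\partial x_p}(\vect{x}_1) = \int_0^\epsilon \frac{\text{d}}{\text{d}s}\left(\frac{\partial F}{\partial x_p}(\vect{x}_1 + s\vect{d})\right)\text{d}s = \int_0^\epsilon \sum_{q=1}^N \frac{\partial^2 F}{\partial x_p \partial x_q}(\vect{x}_1 + s\vect{d})\, d_q\, \text{d}s.
\end{align*}
Taking absolute values, using $\left|\frac{\partial^2 F}{\partial x_p \partial x_q}\right| \leq \alpha$ and $d_q \geq 0$, the inner sum is bounded by $\alpha \sum_q d_q \leq \alpha\beta$, and integrating over $s \in [0,\epsilon]$ gives the bound $\epsilon\alpha\beta$. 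This handles part (a).

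For the second inequality, apply the fundamental theorem of calculus twice (or Taylor's theorem with integral remainder) to $\phi$:
\begin{align*}
F(\vect{x}_2) - F(\vect{x}_1) = \phi(1) - \phi(0) = \phi'(0) + \int_0^1 (1-s)\phi''(s)\, \text{d}s,
\end{align*}
where $\phi'(0) = \nabla F(\vect{x}_1).\vect{d}$ and $\phi''(s) = \sum_{p,q} \frac{\partial^2 F}{\partial x_p \partial x_q}(\vect{x}_1 + s\vect{d})\, d_p d_q$. Since all $d_p, d_q \geq 0$, we have $|\phi''(s)| \leq \alpha \sum_{p,q} d_p d_q = \alpha (\sum_p d_p)^2 \leq \alpha\beta^2$, so $\phi''(s) \geq -\alpha\beta^2$. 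Therefore $\int_0^1 (1-s)\phi''(s)\,\text{d}s \geq -\alpha\beta^2 \int_0^1 (1-s)\,\text{d}s = -\frac{1}{2}\alpha\beta^2$, which yields $F(\vect{x}_2) - F(\vect{x}_1) \geq \nabla F(\vect{x}_1).\vect{d} - \frac{1}{2}\alpha\beta^2$.

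I do not anticipate a genuine obstacle here; the only point requiring care is the nonnegativity of the entries of $\vect{d} = \vect{x}_2 - \vect{x}_1$, which is exactly what lets us pass from $\left|\frac{\partial^2 F}{\partial x_p \partial x_q}\right| \leq \alpha$ and the $\ell_1$ bound $\vect{1}.\vect{d} \leq \beta$ to the bounds $\sum_q d_q \leq \beta$ and $\sum_{p,q} d_p d_q \leq \beta^2$ without any absolute values sneaking in the wrong place. One should also note that $F$ being twice differentiable on a convex region containing the segment (guaranteed since $F$ is a polynomial, the multilinear extension) justifies differentiating under the integral and applying Taylor's theorem; no smoothness subtlety arises.
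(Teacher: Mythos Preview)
Your proof is correct and follows essentially the same approach as the paper: part (a) is identical (integrate the second partials along the segment and use $d_q\geq 0$, $\sum_q d_q\leq\beta$), and for part (b) the paper integrates $\nabla F$ along the segment and plugs in part (a) componentwise, whereas you use the Taylor integral remainder and bound $\phi''$ directly---these two computations are equivalent up to an integration by parts and yield the same $-\tfrac{1}{2}\alpha\beta^2$ remainder.
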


\begin{proof}
Let $\vect{h}_p=[\frac{\partial^2F}{\partial x_p \partial x_1},\cdots,\frac{\partial^2F}{\partial x_p \partial x_N}]^\top$. Then, we can write 
\begin{align}\label{eq::aux3_1}
    &\left|\frac{\partial F}{\partial x_p}({\vect{x}}_1+\epsilon({\vect{x}}_2-{\vect{x}}_1)) -  \frac{\partial F}{\partial x_p}({\vect{x}}_1)\right| =\left|\int_{0}^{\epsilon}
    \vect{h}_l({\vect{x}}_1+\tau({\vect{x}}_2-{\vect{x}}_1)).({\vect{x}}_2-{\vect{x}}_1) \textup{d}\tau\right| \nonumber \\
    &\leq \int_{0}^{\epsilon} \alpha \vect{1}.({\vect{x}}_2-{\vect{x}}_1)\textup{d}\tau = \epsilon \alpha \beta,
\end{align}

Furthermore,
\begin{align*}
   & F({\vect{x}}_2) - F({\vect{x}}_1) = \int_{0}^{1}
   \nabla F({\vect{x}}_1+\epsilon({\vect{x}}_2-{\vect{x}}_1)).(\bar{\vect{x}}(t+1)-\bar{\vect{x}}(t))\textup{d}\epsilon  \nonumber \\ 
   &\geq \int_{0}^{1}
   (\nabla F({\vect{x}}_1)-\epsilon \alpha \beta).({\vect{x}}_2-{\vect{x}}_1)\textup{d}\epsilon = 
   \nabla F({\vect{x}}_1).({\vect{x}}_2\!\!-\!\!{\vect{x}}_1)-\frac{1}{2}\alpha \beta^2,
\end{align*}
with the third line follow from equation~\eqref{eq::aux3_1}, which concludes the proof. 
\end{proof}
\begin{lem}\label{lem::gradient_estimate}{\it
Consider the set value optimization problem~\eqref{eq::mainProblem}. Suppose $f:\mathcal{P} \to \realnonnegative$ is an increasing and submodular set function and consider its multilinear extension $F:\realnonnegative^n \to \realnonnegative$. Let $\widetilde{\nabla F}(\vect{x})$ be the  estimate of $\nabla F(\vect{x})$ that is calculated by taking $K\in\real_{>0}$ samples of set $\mathcal{R}_{\vect{x}}$ according to membership probability vector $\vect{x}$.  Then 
\begin{align}\label{eq::estimationACC}
    \left|\frac{\widetilde{\partial F}}{\partial x_p}(\vect{x}) - \frac{{\partial F}}{\partial x_p}(\vect{x})\right| \geq \frac{1}{2T}f(\mathcal{S}^\star), \quad p\in \{1,\cdots,n\},
\end{align}
with the probability of $2\textup{e}^{-\frac{1}{8T^2}K}$, for any $T\in\real_{>0}$.
}
\end{lem}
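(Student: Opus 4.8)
The plan is to recognize the per-coordinate estimate $\frac{\widetilde{\partial F}}{\partial x_p}(\vect{x})$ as the empirical mean of $K$ independent, identically distributed, bounded random variables, and then apply the Chernoff--Hoeffding inequality (Theorem~\ref{thm::sampling}) essentially verbatim. Let $\mathcal{R}^{(1)},\dots,\mathcal{R}^{(K)}$ denote the $K$ independent samples of $\mathcal{R}_{\vect{x}}$ drawn according to $\vect{x}$, and define $X_i := f(\mathcal{R}^{(i)} \cup \{p\}) - f(\mathcal{R}^{(i)} \setminus \{p\})$ for $i \in \{1,\dots,K\}$, so that $\frac{\widetilde{\partial F}}{\partial x_p}(\vect{x}) = \frac{1}{K}\sum_{i=1}^{K} X_i$. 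By the stochastic form of the gradient in~\eqref{eq::firstDer}, each $X_i$ has expectation $\frac{\partial F}{\partial x_p}(\vect{x})$; hence, writing $S_K = \sum_{i=1}^{K} X_i$, we have $\mathbb{E}[S_K] = K\,\frac{\partial F}{\partial x_p}(\vect{x})$.

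Next I would pin down the range of $X_i$. Since $p \notin \mathcal{R}^{(i)} \setminus \{p\}$, we have $X_i = \Delta_f\!\big(p \mid \mathcal{R}^{(i)} \setminus \{p\}\big)$. Monotonicity of $f$ gives $X_i \ge 0$, and submodularity~\eqref{eq::submodularity} together with $f(\emptyset) = 0$ gives $X_i \le \Delta_f(p \mid \emptyset) = f(\{p\}) \le f(\mathcal{S}^\star)$, where the last inequality uses that the singleton $\{p\}$ is a feasible set for~\eqref{eq::mainProblem}. Thus $X_i \in [0, f(\mathcal{S}^\star)]$; to land exactly on the constant in the statement it is convenient to use the looser symmetric enclosure $X_i \in [-f(\mathcal{S}^\star), f(\mathcal{S}^\star)]$, i.e.\ $b - a = 2f(\mathcal{S}^\star)$ in the notation of Theorem~\ref{thm::sampling}.

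Finally I would invoke Theorem~\ref{thm::sampling} with $\delta = \frac{K}{2T} f(\mathcal{S}^\star)$. Dividing the event $\{\,|S_K - \mathbb{E}[S_K]| > \delta\,\}$ by $K$ turns it into $\big\{\,\big|\frac{\widetilde{\partial F}}{\partial x_p}(\vect{x}) - \frac{\partial F}{\partial x_p}(\vect{x})\big| > \frac{1}{2T} f(\mathcal{S}^\star)\,\big\}$, so this event has probability at most $2\,\textup{e}^{-2\delta^2 / (K (b-a)^2)}$. With $b - a = 2 f(\mathcal{S}^\star)$ the exponent equals $\frac{2 (K/2T)^2 f(\mathcal{S}^\star)^2}{K\,(2 f(\mathcal{S}^\star))^2} = \frac{K}{8T^2}$, which is exactly the claimed $2\,\textup{e}^{-K/(8T^2)}$; with the tighter enclosure $b - a = f(\mathcal{S}^\star)$ one would even get $2\,\textup{e}^{-K/(2T^2)} \le 2\,\textup{e}^{-K/(8T^2)}$, so the bound holds either way and for every $T \in \real_{>0}$.

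There is no real obstacle here: the lemma is a single, direct application of Chernoff--Hoeffding. The only points demanding a bit of care are (i) using monotonicity and submodularity (plus feasibility of singletons under the $k_i = 1$ partition matroid) to bound the per-sample range by $f(\mathcal{S}^\star)$, mirroring the argument already used in the proof of Lemma~\ref{lm::secondDer}, and (ii) carrying the $\frac{1}{2T}$ accuracy target through the Hoeffding exponent so that the constant comes out as $\frac{1}{8T^2}$; choosing the conservative range $2 f(\mathcal{S}^\star)$ makes this bookkeeping land on the stated value. Note also that the statement concerns a fixed coordinate $p$; the union bound over $p \in \mathcal{P}_j$ and over the agents is carried out later, in the proof of Theorem~\ref{thm::main}.
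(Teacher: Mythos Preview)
Your proposal is correct and follows essentially the same route as the paper: bound each sample $f(\mathcal{R}\cup\{p\})-f(\mathcal{R}\setminus\{p\})$ in $[0,f(\mathcal{S}^\star)]$ via monotonicity and submodularity, then apply Chernoff--Hoeffding (Theorem~\ref{thm::sampling}) with the deviation threshold $\tfrac{1}{2T}f(\mathcal{S}^\star)$. The paper additionally centers and normalizes the samples by $f(\mathcal{S}^\star)$ before invoking Hoeffding, but this is cosmetic; your direct version is arguably cleaner, and your observation that the tighter range $b-a=f(\mathcal{S}^\star)$ already yields $2\textup{e}^{-K/(2T^2)}\le 2\textup{e}^{-K/(8T^2)}$ is a nice bonus.
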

\begin{proof}
Define the random variable 
\begin{align*}
  \!  X\!\! =\!\! \left(\!\!(f(\mathcal{R}_{\vect{x}}\!\cup \{p\})\!-\!f(\mathcal{R}_{\vect{x}}\!\!\setminus\! \{p\}))\!-\!\frac{\partial F}{\partial x_{p}}(\vect{x})\!\!\right)\!\!\Big/\!\!f(\mathcal{S}^\star),
\end{align*}
and assume that agent $j\in\mathcal{A}$ takes $K_j$ samples from  $\mathcal{R}_{\vect{x}}$ to construct $\{X_k\}_{k=1}^{K}$ realization of $X$. Since $f$ is a submodular function, then we have $(f(\mathcal{R}_{\vect{x}}\!\cup \{p\})\!-\!f(\mathcal{R}_{\vect{x}}\!\!\setminus\! \{p\}))\leq f(\mathcal{S}^\star)$. Consequently using equation~\eqref{eq::firstDer}, we conclude that $0\leq X \leq 1$. Hence, using Theorem~\ref{thm::sampling}, we have
\begin{align*}
    \left|\SUM{k=1}{K}X_k\right|\geq\frac{1}{2T}K.
\end{align*}
with the probability of $2\textup{e}^{-\frac{1}{8T^2}K}$.  Hence, the estimation accuracy of $\nabla F(\vect{x})$, is given by
\begin{align*}\label{eq::estimationACC}
    \left|\frac{\widetilde{\partial F}}{\partial x_p}(\vect{x}) - \frac{{\partial F}}{\partial x_p}(\vect{x})\right| \geq \frac{1}{2T}f(\mathcal{S}^\star), \quad p\in \{1,\cdots,n\}.
\end{align*}
with the probability of $2\textup{e}^{-\frac{1}{8T^2}K}$.
\end{proof}


\begin{lem}\label{lm::expectedInEq}
\it
Suppose $f:\mathcal{P} \to \realnonnegative$ is an increasing and submodular set function and consider $\vect{x}$ to be a membership probability vector of set $\mathcal{Q} \subset \mathcal{P}$ with $\vect{1}.\vect{x} = 1$. We define $\mathcal{R}_{\vect{x}}$ to be the set resulted by sampling independently each member of $\mathcal{Q}$ according to probability vector $\vect{x}$ and $\mathcal{T}_{\vect{x}}=\{t\}, \,\, t \in \mathcal{Q}$ to be a single member set which is chosen according to $\vect{x}$. the following holds for any random set $\mathcal{S} \in \mathcal{P} \setminus \mathcal{Q}$. 
\begin{align*}
    \mathbb{E}[f(\mathcal{R}_{\vect{x}_{ii}} \cup S)] \leq \mathbb{E}[f(\mathcal{T}_{\vect{x}_{ii}} \cup S)].
\end{align*}
\end{lem}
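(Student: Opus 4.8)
The plan is to reduce the statement to a pointwise inequality in the realization of $\mathcal{S}$ and then invoke subadditivity of normalized monotone submodular functions. First I would condition on a realization of the random set $\mathcal{S}$; since $\mathcal{S}\subset\mathcal{P}\setminus\mathcal{Q}$ we have $\mathcal{S}\cap\mathcal{Q}=\emptyset$, and the sampling producing $\mathcal{R}_{\vect{x}}$ and $\mathcal{T}_{\vect{x}}$ is carried out from $\mathcal{Q}$ independently of $\mathcal{S}$, so it suffices to prove $\mathbb{E}[f(\mathcal{R}_{\vect{x}}\cup\mathcal{S})]\le\mathbb{E}[f(\mathcal{T}_{\vect{x}}\cup\mathcal{S})]$ for a fixed $\mathcal{S}$ and then take the outer expectation over $\mathcal{S}$. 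Define the contracted set function $g:2^{\mathcal{Q}}\to\realnonnegative$ by $g(\mathcal{A})=f(\mathcal{A}\cup\mathcal{S})$. A one-line check gives $\Delta_g(q|\mathcal{A})=\Delta_f(q|\mathcal{A}\cup\mathcal{S})$, and since $\mathcal{A}_1\subset\mathcal{A}_2$ implies $\mathcal{A}_1\cup\mathcal{S}\subset\mathcal{A}_2\cup\mathcal{S}$, monotonicity and submodularity of $f$ are inherited by $g$. The target becomes $\mathbb{E}[g(\mathcal{R}_{\vect{x}})]\le\mathbb{E}[g(\mathcal{T}_{\vect{x}})]=\sum_{q\in\mathcal{Q}}x_q\,g(\{q\})$, where I index the coordinates of $\vect{x}$ by the elements of $\mathcal{Q}$.

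Next I would normalize. Put $h(\mathcal{A})=g(\mathcal{A})-g(\emptyset)$, which is a normalized monotone submodular set function. Because $\vect{1}.\vect{x}=1$, the additive constant $g(\emptyset)$ appears identically on both sides: $\mathbb{E}[g(\mathcal{R}_{\vect{x}})]=g(\emptyset)+\mathbb{E}[h(\mathcal{R}_{\vect{x}})]$ and $\sum_{q\in\mathcal{Q}}x_q\,g(\{q\})=g(\emptyset)+\sum_{q\in\mathcal{Q}}x_q\,h(\{q\})$. Hence it remains to show $\mathbb{E}[h(\mathcal{R}_{\vect{x}})]\le\sum_{q\in\mathcal{Q}}x_q\,h(\{q\})$. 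This is precisely the step where the hypothesis $\vect{1}.\vect{x}=1$ is essential; without it the cancellation fails and the claim is in general false.

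The remaining inequality follows from subadditivity. For any fixed $\mathcal{A}=\{q_1,\dots,q_k\}\subset\mathcal{Q}$, writing $h(\mathcal{A})=\sum_{j=1}^{k}\Delta_h(q_j\mid\{q_1,\dots,q_{j-1}\})$ (using $h(\emptyset)=0$) and applying submodularity $\Delta_h(q_j\mid\{q_1,\dots,q_{j-1}\})\le\Delta_h(q_j\mid\emptyset)=h(\{q_j\})$ term by term, one obtains $h(\mathcal{A})\le\sum_{q\in\mathcal{A}}h(\{q\})$. Taking the expectation over the independent sampling of $\mathcal{R}_{\vect{x}}$ and using $\mathbb{P}[q\in\mathcal{R}_{\vect{x}}]=x_q$ yields $\mathbb{E}[h(\mathcal{R}_{\vect{x}})]\le\sum_{q\in\mathcal{Q}}\mathbb{P}[q\in\mathcal{R}_{\vect{x}}]\,h(\{q\})=\sum_{q\in\mathcal{Q}}x_q\,h(\{q\})=\mathbb{E}[h(\mathcal{T}_{\vect{x}})]$. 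Undoing the normalization and the contraction, and then taking the outer expectation over $\mathcal{S}$, completes the proof.

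I do not expect a serious obstacle: the argument is short once the reduction is in place. The one point needing care — and effectively the crux — is recognizing that the hypothesis $\vect{1}.\vect{x}=1$ is exactly what makes the constant terms cancel after normalization, collapsing the whole claim to the elementary subadditivity bound $h(\mathcal{A})\le\sum_{q\in\mathcal{A}}h(\{q\})$; one should also be mildly careful that $\mathcal{S}$ is sampled independently of $\mathcal{R}_{\vect{x}}$ and $\mathcal{T}_{\vect{x}}$ (which holds in the application inside the proof of Theorem~\ref{thm::main}, where $\mathcal{S}$ is a union of the other agents' independent samples) so that conditioning on $\mathcal{S}$ is legitimate.
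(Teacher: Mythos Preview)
Your proposal is correct and follows essentially the same argument as the paper. Both proofs condition on $\mathcal{S}$, telescope $f(\mathcal{R}_{\vect{x}}\cup\mathcal{S})-f(\mathcal{S})$ into marginal gains, apply submodularity to bound each by $\Delta_f(q\mid\mathcal{S})$, and then use independence together with $\vect{1}.\vect{x}=1$ to identify $\sum_{q}x_q\,\Delta_f(q\mid\mathcal{S})$ with $\mathbb{E}[f(\mathcal{T}_{\vect{x}}\cup\mathcal{S})]-f(\mathcal{S})$; your contracted and normalized functions $g,h$ are just a repackaging of the paper's direct computation, with $h(\{q\})=\Delta_f(q\mid\mathcal{S})$.
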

\begin{proof}
Defining $\mathcal{R}_{\vect{x}}=\{r_1,\cdots,r_o\}$, then we can write 
\begin{align*}
    &\mathbb{E}[f(\mathcal{R}_{\vect{x}} \cup \mathcal{S})]=\mathbb{E}[f(\mathcal{S})+\SUM{l=1}{o}\Delta_f(r_l|\mathcal{S} \cup \{r_1,\cdots, r_{l-1}\})] \\
    &\leq \mathbb{E}[f(\mathcal{S})+\SUM{r_l \in  \mathcal{R}_{\vect{x}} }{}\Delta_f(r_l|\mathcal{S})]=\mathbb{E}_{\mathcal{S}}[\mathbb{E}_{\mathcal{R}_{\vect{x}}|\mathcal{S}}[f(\mathcal{S})+\SUM{r_l \in  \mathcal{R}_{\vect{x}} }{}\Delta_f(r_l|\mathcal{S})]]\\
    &=\mathbb{E}_{\mathcal{S}}[f(\mathcal{S})+\mathbb{E}_{\mathcal{R}_{\vect{x}}|\mathcal{S}}[\SUM{r_l \in  \mathcal{R}_{\vect{x}} }{}\Delta_f(r_l|\mathcal{S})]]=\mathbb{E}_{\mathcal{S}}[f(\mathcal{S})+\SUM{r_l \in  \mathcal{Q} }{}x_{l}\Delta_f(r_l|\mathcal{S})]\\
    &=\mathbb{E}_{\mathcal{S}}[f(\mathcal{S})+\mathbb{E}_{\mathcal{T}_{\vect{x}}|\mathcal{S}}[\Delta_f(t|\mathcal{S})]]=\mathbb{E}_{\mathcal{S}}[\mathbb{E}_{\mathcal{T}_{\vect{x}}|\mathcal{S}}[f(\mathcal{S})+\Delta_f(t|\mathcal{S})]]=\mathbb{E}[f(\mathcal{T}_{\vect{x}} \cup S)]
\end{align*}
which concludes the proof. 
\end{proof}

\end{document}